\newtheoremstyle{exostyle} 
{\topsep}
{\topsep}
{}
{}
{\bfseries}
{.}
{ }
{\thmname{#1}\thmnumber{ #2}\thmnote{. \normalfont{\textit{#3}}}}
\theoremstyle{exostyle} 
\theoremstyle{plain}
\newtheorem{thm}{Theorem}[section]
\newtheorem*{thm*}{Theorem}
\newtheorem{lem}[thm]{Lemma}
\newtheorem{pro}[thm]{Proposition}
\newtheorem{cor}[thm]{Corollary}
\theoremstyle{definition}
\newtheorem{Def}[thm]{Definition}
\newtheorem{rem}[thm]{Remark}
\theoremstyle{remark}
\newcommand{\wt}{\widetilde}
\newcommand{\bP}{\mathbb{P}}
\newcommand{\bC}{\mathbb{C}}
\newcommand{\bQ}{\mathbb{Q}}
\newcommand{\bZ}{\mathbb{Z}}
\newcommand{\gl}{\lambda}
\newcommand{\gO}{\Omega}
\newcommand{\ga}{\alpha}
\newcommand{\gS}{\Sigma}
\newcommand{\gb}{\beta}
\newcommand{\gs}{\sigma}
\newcommand{\cC}{\mathcal{C}}
\newcommand{\cF}{\mathcal{F}}
\newcommand{\cO}{\mathcal{O}}
\newcommand{\reg}{\mathrm{reg}}
\newcommand{\ol}{\overline}
\newcommand{\colonec}{\mathrel{:=}}
\newcommand{\cusp}{\mathrm{cusp}}
\newcommand{\Defo}{\mathrm{Def}}
\newcommand{\supp}{\mathrm{supp}}
\newcommand{\univ}{\mathrm{univ}}
\newcommand{\Td}{\mathrm{Td}}
\newcommand{\chchar}{\mathrm{ch}}
\newcommand{\CH}{\mathrm{CH}}
\newcommand{\pr}{\mathrm{pr}}
\newcommand{\Pic}{\mathrm{Pic}}
\newcommand{\hhom}{\mathrm{hom}}
\newcommand{\tors}{\mathrm{tors}}
\newcommand{\coker}{\mathrm{coker}}
\newcommand{\jac}[1]{\mathrm{jac}\left( #1 \right)}
\newcommand{\dr}{\partial}
\renewcommand{\(}{\left(}
\renewcommand{\)}{\right)}
\newcommand{\dto}{\dashrightarrow}
\let\orgdescriptionlabel\descriptionlabel
\renewcommand*{\descriptionlabel}[1]{%
  \let\orglabel\label
  \let\label\@gobble
  \phantomsection
  \edef\@currentlabel{#1}%
  \let\label\orglabel
  \orgdescriptionlabel{#1}%
}
\tikzset{node distance=2cm, auto}
\numberwithin{equation}{section}
\title{Singularities of elliptic curves in $K3$ surfaces and the Beauville-Voisin zero-cycle} 
\author{Hsueh-Yung Lin}
\address{Centre de Math\'ematiques Laurent Schwartz, 91128 Palaiseau C\'edex, France}
\begin{document}

\maketitle

\begin{altabstract}
Under some hypotheses on the singular type of the one-parameter family of elliptic curves in a primitively polarized  $K3$ surface $S$ determined by its polarization  (which is expected to be true for a very general polarized $K3$ surface), we give a more geometric proof of the fact that the second Chern class of $S$ is equal to $24 \cdot o_S$ in the Chow group of $0$-cycles where $o_S$ is the Beauville-Voisin canonical $0$-cycle.
\end{altabstract}

\section{Introduction}

For any smooth complex projective surface $S$, D. Mumford~\cite{Mumzerocycle} showed that as long as  $H^0(S, \gO_S^2)$ is nontrivial, the Chow group of  $0$-cycles $\CH_0(S)$ is infinite dimensional in the sense of Roitman, which is equivalent to the property that for any smooth projective variety $V$, not necessarily connected, and any correspondence $\Gamma \subset V \times S$, the Chow group of  $0$-cycles of degree zero ${\CH_0(S)}_\hhom$ is never contained in $\left\{ \Gamma_*[x] - [x_0] \mid x \in V \right\}$ for any $x_0 \in X$~\cite{Roitman}. 

Recall that a  projective $K3$ surface $S$ is a smooth projective surface with trivial canonical  line bundle $K_S = \gO_S^2$ and vanishing $H^1(S,\cO_S)$. Since $H^0(S, \gO_S^2) \ne \{0\}$ for any $K3$ surface $S$, Mumford's result implies that $\CH_0(S)$ is very large whenever $S$ is a $K3$ surface.

In contrast to Mumford's result on the Chow group of $0$-cycles of surfaces, Beauville and Voisin~\cite{BV} showed that there exists a "canonical" $0$-cycle $o_S \in \CH_0(S)$ given the class of any point supported on any (singular) rational curve in $S$. Moreover, this $0$-cycle satisfies the following properties: 
\begin{thm}\label{BVmain}
\hfill
\begin{enumerate}[(i)]
\item For any $L, L' \in \Pic(S)$, their intersection product in $\CH_0(S)$ is proportional to $o_S$;
\item The second Chern class of $S$ satisfies the equality  in $\CH_0(S)$:
\begin{equation}\label{maineq}
c_2(S) = 24 \cdot o_S.
\end{equation}
\end{enumerate}
\end{thm}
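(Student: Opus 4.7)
The plan is to prove (i) first by a Chow-theoretic argument and then deduce (ii) from a detailed study of the one-parameter family of elliptic curves in $|H|$ signalled by the abstract. For (i), a preliminary observation is that $o_S$ is well defined: points on any irreducible rational curve $R\subset S$ are rationally equivalent via the normalization $\bP^1\to R$, and any two rational curves on $S$ can be connected through further rational curves (using Bogomolov--Mumford-type results) so their points agree in $\CH_0(S)$. By bilinearity and the polarization identity it then suffices to treat $c_1(H)^2$ for the primitive polarization $H$; representing a suitable multiple of $H$ as $\sum n_i[R_i]$ with the $R_i$ rational and intersecting two transverse such representatives, the intersection points lie on rational curves, and this yields $c_1(L)\cdot c_1(L')=(L\cdot L')\cdot o_S$ for arbitrary $L,L'\in\Pic(S)$.

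For part (ii), which is the paper's main contribution, I would introduce the Severi-type curve $B\subset |H|$ parametrising curves of geometric genus one in $|H|$, i.e.\ curves of arithmetic genus $g$ with the maximal possible number $g-1$ of nodes, and study the universal family $\cE\subset B\times S$ together with the second projection $p\colon\cE\to S$. On a smooth elliptic fibre $E_t$ both $T_{E_t}$ and the normal bundle $N_{E_t/S}$ have degree zero, so a Chern-class computation based on the short exact sequence $0\to T_{E_t}\to T_S|_{E_t}\to N_{E_t/S}\to 0$ shows that smooth fibres contribute nothing to $c_2$ locally. Consequently the class $c_2(S)\in\CH_0(S)$ must localise at those special $t\in B$ where $E_t$ acquires an additional singularity (a cusp, a collision of nodes, or a further jump of the $\delta$-invariant up to $g$, in which case $E_t$ itself becomes rational). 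Each such degeneration point lies on a singular elliptic or on a rational curve in $S$, so by (i) and the definition of $o_S$ it contributes its Euler number times $o_S$; a global Euler-characteristic count on the family identifies the total as $\chi_{\mathrm{top}}(S)=24$, giving $c_2(S)=24\cdot o_S$.

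The main obstacle, precisely what the hypotheses on singular type in the abstract are designed to control, is the local analysis of $p$ at the special fibres of $\cE\to B$: one must know that the degenerations are mild enough (only prescribed singularity types, such as nodes degenerating to cusps or colliding, rather than pathological specialisations) so that the corresponding points of $S$ visibly sit on rational curves and so that the local contribution to $c_2$ equals the expected integer Euler number. Without such control the contributions of $c_2$ could land on points of $\CH_0(S)$ not a priori proportional to $o_S$, or the Euler-number bookkeeping could silently fail. Carrying out this local-to-global count rigorously, under explicit and geometrically plausible assumptions on the singularities of the family, is the technical heart of the paper's geometric proof.
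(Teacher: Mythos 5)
Part (i) of your proposal is fine: it is exactly the Bogomolov--Mumford/Beauville--Voisin argument that the paper itself invokes (represent ample classes by sums of rational curves, intersect, and use that all points on rational curves give the same class $o_S$). Note also that your strategy for (ii) is \emph{not} the one Beauville and Voisin actually use (they decompose the small diagonal in $S\times S\times S$); localizing $c_2(S)$ on the one-parameter family of genus-one curves in $|L|$ is precisely the alternative route this paper develops, so you have correctly guessed the paper's program rather than the original proof.

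However, your part (ii) has a genuine gap at its crucial step. After (correctly) observing that immersed smooth fibres contribute nothing, you assert that the residual contribution is supported at degeneration points which ``lie on a singular elliptic or on a rational curve in $S$'' and hence are proportional to $o_S$ ``by (i) and the definition of $o_S$.'' This does not follow. The fibres where the extra nodes collide or the geometric genus drops are indeed rational and harmless; but the contribution that actually carries $c_2$ comes from the \emph{non-immersive} points of the universal map, i.e.\ the cusps of cuspidal curves of geometric genus one. Such a curve is the image of a smooth elliptic curve and is \emph{not} rational, so its points are not $o_S$ by definition, and part (i) says nothing about an isolated point on an irrational curve. Proving that the cusp locus $Z_{\mathrm{cusp}}$ pushes forward to a multiple of $o_S$ is the entire technical content of the paper: one introduces the double point locus $C$ with its involution $\sigma$, identifies $Z_{\mathrm{cusp}}$ with the fixed locus $C^{\sigma}$, compares $[C^{\sigma}]$ with the degeneracy divisor $W$ of the map $j_1^*T_{U/B}\oplus j_2^*T_{U/B}\to \phi^*T_S$ and with $\tau(\Omega_{C/B})$, and finally uses the double point formula plus a spreading argument to show $[C]$ differs from $\tilde f^*L$ by a cycle supported on fibres. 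None of this is replaced by your ``Euler-characteristic count,'' which only explains the coefficient $24$ once support on rational curves is already established. As written, the key rational-equivalence statement is assumed rather than proved.
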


The first part of Theorem~\ref{BVmain} is mainly a consequence of Bogomolov and Mumford's result~\cite{MM}, saying that every ample complete  linear system $|H|$ in $S$ contains a rational curve. The proof that $c_2(S) = 24 o_S$ in $\CH_0(S)$ is more involved. In~\cite{BV}, Beauville and Voisin proved this identity using a decomposition of the small diagonal $\delta_S = \{(x,x,x) \mid x \in S\}$ in $\CH_2(S \times S \times S)$, viewed as the correspondence  sending $x \in S$ to $\delta_S(x) = (x,x) \in S \times S$. The theorem follows by taking for $\Delta \in \CH_2(S \times S)$ the class of the diagonal, so that  $\delta_S^* \Delta = c_2(S)$. 

Note that the proof of the second statement of the above theorem is rather easy in the case where $S$ is a complete intersection surface in a variety $X$ such that $\CH^\bullet(X)$ is generated by $\CH^1(X)$ as a graded algebra (for example when $X$ is a weighted projective space), in particular for $K3$ surfaces of genus from $2$ to $5$. Indeed, since $c_2(S) = c_2(T_X)_{|S} - c_2(N_{S/X})$ is the restriction of the class of some cycle in $X$, it can be realized as the intersection product of two classes of divisors in $S$ by hypothesis. So $c_2(S)$ is proportional to $o_S$ by the first part of Theorem~\ref{BVmain}.

Another case where this equality is easily proved is the case of a $K3$ surface admitting an elliptic pencil $\pi : S \to \bP^1$ with only reduced components. The proof goes as follows: let $\pi_{\reg} : S_{\reg} \to \bP^1_{\reg} \colonec \pi(S_\reg)$ be the restriction of $\pi$ to the complementary of singular fibers of $\pi$. The restriction of $c_2(S)$ on $S_\reg$ equals $c_1(\pi_\reg^*\gO_{\bP^1_\reg}) \cdot c_1(\gO_{S_\reg/\bP^1_\reg})$ in $\CH_0(S_\reg)$. Thus modulo $0$-cycles supported on singular fibers (which are singular rational curves), $c_2(S)$ is the product of the first Chern class of two line bundles in $\CH_0(S)$ by localization, hence $c_2(S)$ is proportional to $o_S$.

The aim of this paper is to give a more geometric proof in the spirit of the proof given above, of equality~(\ref{maineq}) under some hypotheses on the singularities of the one-parameter family of elliptic curves determined by the polarization, which are expected to be true for a very general polarized $K3$ surface of genus $\ge 2$:
\begin{Def}\label{hypE}
Let $(S,L)$ be a primitively polarized $K3$ surface. We say that $(S,L)$ satisfies condition $(E)$ if 
\begin{enumerate}[i)]
\item $\Pic(S) = \bZ \cdot L$;
\item If $E \in |L|$ is an elliptic curve which has a non-immersive point, the singular points of $E$ consist of ordinary multiple points and a cusp ($A_2$ singularity type).  
\end{enumerate}
\end{Def}

The main Theorem that we will prove is the following, which from the definition of the Beauville-Voisin $0$-cycle is equivalent to the identity $c_2(S) = 24 \cdot o_S$ in the Chow group of $0$-cycles of $S$.
\begin{thm}\label{main}
If $(S,L)$ satisfies condition $(E)$, then $c_2(S) \in \CH_0(S)$ is supported on rational curves.
\end{thm}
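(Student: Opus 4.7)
The plan is to imitate the elliptic-pencil argument sketched in the introduction, but relativized over the one-parameter family $\cE \subset |L|$ of elliptic curves of geometric genus $1$ in $|L|$. Let $\cX \subset S \times \cE$ be the universal family, with projections $\pi : \cX \to \cE$ and $p : \cX \to S$; the latter is generically finite and surjective of some degree $d > 0$ since the members of $\cE$ sweep out $S$. Condition $(E)$(ii) restricts the singularities of fibers of $\pi$ to ordinary multiple points and $A_2$ cusps, and under this restriction one can produce a smooth model $\tilde\cX$ (by resolving $\cX$ when necessary) together with an induced elliptic fibration $\tilde\pi : \tilde\cX \to \cE$ with smooth general fiber, and a generically finite morphism $\tilde p : \tilde\cX \to S$ still of degree $d$.

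The central Chern-class computation then takes place on $\tilde\cX$. On the smooth locus of $\tilde\pi$, the relative tangent bundle $T_{\tilde\cX/\cE}$ restricts to the trivial line bundle on every smooth elliptic fiber, so by cohomology and base change $T_{\tilde\cX/\cE} \cong \tilde\pi^* M$ for some line bundle $M$ on $\cE$. The relative cotangent sequence of $\tilde\pi$ then yields
\begin{equation*}
c_2(T_{\tilde\cX}) = c_1(T_{\tilde\cX/\cE}) \cdot c_1(\tilde\pi^* T_\cE) = \tilde\pi^* c_1(M) \cdot \tilde\pi^* c_1(T_\cE) = 0
\end{equation*}
in $\CH_0$ of the smooth locus of $\tilde\pi$, since an intersection of two fiber-type divisors on a surface fibered over a curve vanishes. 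Hence $c_2(T_{\tilde\cX})$ is supported on the non-smooth locus of $\tilde\pi$, that is, on the singular points of its fibers. I would then compare $c_2(T_{\tilde\cX})$ with $\tilde p^* c_2(S)$ via the sequence $0 \to T_{\tilde\cX} \to \tilde p^* T_S \to Q \to 0$, where $Q$ is supported on the ramification divisor $R$ of $\tilde p$. The Whitney formula gives
\begin{equation*}
\tilde p^* c_2(S) - c_2(T_{\tilde\cX}) = c_1(T_{\tilde\cX}) \cdot c_1(Q) + c_2(Q),
\end{equation*}
and since $K_S = 0$, Riemann--Hurwitz forces $c_1(Q) = K_{\tilde\cX} = R$. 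Under condition $(E)$ the ramification of $\tilde p$ is generically simple, so $Q$ is generically the structure sheaf of $R$ and $c_2(Q) = R^2 = K_{\tilde\cX}^2$. The two pieces of the correction then cancel and $\tilde p^* c_2(S) = c_2(T_{\tilde\cX})$ in $\CH_0(\tilde\cX)$.

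Pushing forward, $d \cdot c_2(S) = \tilde p_* c_2(T_{\tilde\cX})$ is a $0$-cycle supported on the $\tilde p$-image of the non-smooth locus of $\tilde\pi$, a finite set of points in $S$ consisting of singular points of elliptic members of $\cE$ (and images of the resolution's exceptional curves, which map to such singular points). It remains to show that each such point lies on a rational curve of $S$: this is where condition $(E)$(ii) is used once more, the point being that ordinary multiple points and $A_2$ cusps are the singularity types through which one can produce a rational member of $|L|$ (or an exceptional $\bP^1$ above them in an auxiliary blow-up). Once this is established, $\tilde p_* c_2(T_{\tilde\cX}) \in \bZ \cdot o_S$, and the torsion-freeness of $\CH_0(S)$ (Roitman) lets us divide by $d$ to conclude $c_2(S) \in \bZ \cdot o_S$, which is Theorem~\ref{main}.

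The main obstacles lie in the ``magical cancellation'' $c_2(Q) = K_{\tilde\cX}^2$ and in the geometric assertion that singular points of elliptic members of $\cE$ lie on rational curves of $S$. Both require a careful local analysis under condition $(E)$(ii): the first forces $\tilde p$ to have only simple tame ramification along its reduced ramification locus (so that $Q$ really is $\cO_R$ generically), while the second demands an explicit construction of a rational curve through every ordinary multiple point or $A_2$ cusp of an elliptic curve in $\cE$. Coordinating these steps with the construction of the resolution $\tilde\cX \to \cX$ so that no extraneous $0$-cycles appear outside the rational-curve locus is the technical heart of the proof.
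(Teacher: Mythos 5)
Your first half (the vanishing of $c_2(T_{\tilde\cX})$ away from the degenerate fibers of $\tilde\pi$, hence its support on rational curves) is sound and matches the paper's starting point. But the ``magical cancellation'' $\tilde p^* c_2(S) = c_2(T_{\tilde\cX})$ is false, and it is false in exactly the place where all the work of the paper lives. By condition $(E)$ the ramification divisor $R$ of $\tilde p$ is a union of \emph{fibers} of $\tilde\pi$, namely the smooth elliptic fibers whose image in $S$ is a cuspidal curve (Lemma~\ref{ramfib}); along such a fiber $E$ the cokernel $Q$ of $T_{\tilde\cX} \to \tilde p^* T_S$ is not $\cO_R$ but acquires an extra torsion of length $1$ at the cusp point (this is visible in the normal sheaf computation $N_f = \cO_E(-p)\oplus i_p(\bC)$ of Lemma~\ref{Homsmooth}, and in the local model $(x,y)\mapsto(x+y^2,\,y^3+\tfrac32 xy)$). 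Consequently $c_2(Q) = R^2 + [Z_\cusp]$ rather than $R^2$, and what your Whitney-formula computation actually yields is the paper's Proposition~\ref{id}: $(\deg \tilde p)\, c_2(S) = \tilde p_*[Z_\cusp]$ modulo cycles on rational curves. If the cancellation held as you claim, the theorem would follow in one page with no use of the double point locus; the residual cycle $[Z_\cusp]$ is the whole point.

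Your fallback for handling these residual points --- ``each such point lies on a rational curve of $S$'' --- does not work either, and no local analysis of the singularity type will produce it. The points of $Z_\cusp$ sit on smooth elliptic fibers of $\tilde\pi$ whose images are cuspidal \emph{elliptic} curves in $|L|$; since $\Pic(S)=\bZ\cdot L$ there is no auxiliary rational curve through a given cusp, and the needed statement is only that the \emph{class} $\tilde p_*[Z_\cusp]\in\CH_0(S)_\bQ$ is proportional to $o_S$, not that its support meets rational curves. Establishing this is a global cycle-theoretic argument: the paper introduces the double point locus $C\subset U\times_S U$ with its involution $\gs$, identifies $Z_\cusp$ with the fixed locus $C^\gs$, proves the relation $\tau(\gO_{C/B}) + [C^\gs] = [W]$ for the degeneracy locus $W$ of $j_1^*T_{U/B}\oplus j_2^*T_{U/B}\to \phi^*T_S$, and uses the double point formula plus the spreading principle to write $(j_1)_*[C]$ as $\tilde f^*L$ plus fibral classes, so that everything reduces to intersections of divisors, which are proportional to $o_S$ by Beauville--Voisin. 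None of this is present in your proposal, so the argument as written does not close.
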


Note that since rational curves can only specialize to rational curves, in order to prove the second statement of Theorem~\ref{BVmain} in full generality, it suffices to prove it for a very general $(S,L)$ in the moduli space of polarized $K3$ surfaces. By Theorem~\ref{main}, one can try to prove this by showing that a very general polarized $K3$ surface satisfies condition $(E)$. This is expected to be true in light of the  following two theorems:

\begin{thm}[X. Chen~\cite{XChen}]\label{ratnodal}
All rational curves in the primitive class of a general $K3$ surface of genus $g \ge 2$ are nodal.
\end{thm}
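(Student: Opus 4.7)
Let $(S,L)$ be a general polarized $K3$ surface of genus $g \ge 2$, so $\dim |L| = g$ and the arithmetic genus of any $C \in |L|$ equals $g$. Since $\Pic(S) = \bZ \cdot L$ on a very general $(S,L)$, every $C \in |L|$ is irreducible. For a rational $C \in |L|$ with normalization $\nu \colon \bP^1 \to C$, the genus formula gives
\[
\sum_{p \in \mathrm{Sing}(C)} \delta_p = g.
\]
Both ordinary nodes ($A_1$) and ordinary cusps ($A_2$) contribute $\delta_p = 1$, while higher singularities have $\delta_p \ge 2$, so the genus formula alone does not preclude cusps. The plan is to rule them out (and \emph{a fortiori} worse singularities) by an equisingular deformation count: a node imposes one condition on $|L|$ while a cusp imposes two. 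If $C$ had $\kappa \ge 1$ cusps and the remaining $g - \kappa$ singularities were nodes, the equisingular stratum $V^{\mathrm{es}} \subset |L|$ through $C$ would have expected dimension $g - (g - \kappa) - 2\kappa = -\kappa < 0$, which contradicts the existence of $C$ \emph{provided} one knows that $V^{\mathrm{es}}$ has nonnegative dimension at $C$. The core of the proof is therefore to establish the regularity of the Severi variety of rational curves in $|L|$.

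I would establish this regularity by degenerating $(S,L)$ to the boundary of the moduli space of polarized $K3$ surfaces. Consider a type II stable degeneration $(S_0, L_0)$ with $S_0 = R_1 \cup_E R_2$ a union of two smooth rational surfaces meeting transversely along a common smooth anticanonical elliptic curve $E$, with $L_0$ restricting to an ample class on each $R_i$. A rational curve $C_t \in |L_t|$ on a nearby fiber specializes to a curve $C_0 \subset S_0$ whose components lie in $R_1$ or $R_2$ and meet at points of $E$; the rationality of $C_t$ forces the dual graph of $C_0$ to be a tree, so $C_0$ is built from rational curves on the two sides glued transversely along $E$. On each smooth rational surface $R_i$, classical results on Severi varieties of rational curves (going back to Ran and Caporaso--Harris) apply: under appropriate positivity of $L_0|_{R_i}$ the corresponding Severi varieties are regular and their members are nodal. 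A local smoothing analysis at each node of $C_0$ lying on $E$ shows that a nodal limit lifts to a node of $C_t$, and conversely any cuspidal (or worse) singularity on $C_t$ would have to specialize to a non-nodal limit, contradicting the regularity statement on $S_0$. This forces $V^{\mathrm{es}}$ at $C$ to have the expected nonnegative dimension, and the count above leaves nodes as the only possibility.

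The main obstacle is precisely the analysis of the limiting configurations and their obstruction theory: one must (i) enumerate the combinatorial types of $C_0$ that can arise as the limit of a rational $C_t$, keeping track of the multiplicities at which each branch meets $E$; (ii) establish regularity of the relevant Severi varieties on each rational component $R_i$ with the induced tangency conditions along $E$; and (iii) carry out the local deformation-theoretic computation at the double locus $E$ to control how singularities of $C_0$ can unfold in the smoothing. Step (iii) is the heart of the matter, because an apparently nodal limit at a point of $E$ could \emph{a priori} hide a cusp in the general fiber, and only a careful computation of the equisingular obstruction map across the double locus — showing that the cuspidal stratum is cut out by two independent conditions transverse to the smoothing direction — rules this out and closes the argument.
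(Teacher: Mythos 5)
This statement is not proved in the paper at all: it is X.~Chen's theorem, quoted from~\cite{XChen} and used only as evidence that condition $(E)$ should hold for a very general polarized $K3$ surface. So there is no internal proof to compare against; the only question is whether your argument stands on its own, and it does not. What you have written is a strategy outline rather than a proof. The opening dimension count (a cusp would force the equisingular stratum to have negative expected dimension) yields a contradiction only if one already knows that the relevant Severi strata are regular, i.e.\ of the expected dimension; you acknowledge this, and then the entire burden of the proof is shifted onto the degeneration argument, whose three key steps (i)--(iii) you explicitly list as obstacles without carrying out any of them. In particular step (iii), deciding whether a seemingly nodal limit on the central fiber can unfold into a cusp on the nearby fiber, is exactly the hard point, and it cannot be discharged by asserting that ``a careful computation \dots rules this out.''

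There are also specific assertions in the sketch that are unjustified and in fact delicate. The claim that the rationality of $C_t$ forces the dual graph of the flat limit $C_0$ to be a tree is false as stated for limits taken in the linear system: the flat limit in $|L_0|$ can be non-reduced, and its components can form configurations containing cycles; controlling these limits, including the multiplicities with which branches meet the double curve $E$, is a substantial part of any actual argument along these lines. Likewise, the regularity of Severi varieties with tangency conditions along an anticanonical elliptic curve on a rational surface is a Caporaso--Harris-type statement that must be set up and proved for the specific $R_i$ and $L_0|_{R_i}$ arising in your degeneration, not merely invoked. The proposal identifies a reasonable line of attack, broadly in the spirit of Chen's degeneration arguments, but every step that carries actual content is deferred, so it does not constitute a proof of the theorem.
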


\begin{thm}[Diaz-Harris~{\cite[Theorem $1.4$]{DiazHarris}}]\label{DH}
Let $V_{d,g}$ be the Severi variety parametrizing plane curves of degree $d$ and geometric genus $g$. If $W \subset V_{d,g}$ is any codimension $1$ subvariety and $p \in W$ a general point representing a curve $C$, then the singularities of $C$ are either
\begin{enumerate}[i)]
\item n nodes ($A_1$ singularity type);
\item $n-1$ nodes and one cusp ($A_2$ singularity type);
\item $n-2$ nodes and one tacnode ($A_3$ singularity type);
\item $n-3$ nodes and one ordinary triple point ($D_4$ singularity type),
\end{enumerate}
where $n = \frac{1}{2}(d-1)(d-2) - g$.
\end{thm}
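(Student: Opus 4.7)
The plan is to reduce the statement to a local analysis of plane curve singularities via a codimension count in the Severi variety, and then to enumerate the singularity types that can occur generically on a codimension-$1$ subvariety of $V_{d,g}$. To each isolated plane curve singularity $\sigma$ I would attach the invariant
\[
c(\sigma) \colonec \mu(\sigma) - \delta(\sigma),
\]
the Milnor number minus the delta invariant. Geometrically $c(\sigma)$ measures the codimension of the equisingular stratum inside the equigeneric stratum of the mini-versal deformation of $\sigma$: the equigeneric locus (where $\sum\delta$ is preserved) has dimension $\mu-\delta$, and for simple singularities the equisingular locus is a single point, so $c(\sigma)$ counts the independent conditions that a curve of fixed geometric genus $g$ must satisfy in order to carry a singularity of type $\sigma$ rather than a cluster of $\delta(\sigma)$ nodes. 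This provides the local codimension bound
\[
\codim_{V_{d,g}}\bigl( V^{(\sigma_1, \ldots, \sigma_k)} \bigr) \,\ge\, \sum_{i} c(\sigma_i)
\]
for the locally closed stratum $V^{(\sigma_1, \ldots, \sigma_k)} \subset V_{d,g}$ parametrizing plane curves of equisingular type $(\sigma_1, \ldots, \sigma_k)$.

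Next I would enumerate: a direct calculation of Milnor and delta invariants gives $c(A_1)=0$, $c(A_2)=c(A_3)=c(D_4)=1$, and $c(\sigma)\ge 2$ for every other simple singularity (all $A_n$ with $n\ge 4$, all $D_n$ with $n\ge 5$, and $E_6, E_7, E_8$); by Arnold's classification every non-simple isolated plane curve singularity also satisfies $c(\sigma)\ge 2$. Combining with the codimension bound: if $W\subset V_{d,g}$ has codimension $1$ and a general $C\in W$ carried either a singularity with $c(\sigma)\ge 2$ or two or more non-nodal singularities, then $W$ would lie inside a stratum of codimension $\ge 2$ in $V_{d,g}$, contradicting $\codim W = 1$. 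Hence a general $C\in W$ has at most one non-nodal singularity, necessarily of type $A_2$, $A_3$, or $D_4$; the genus constraint $\sum_i \delta(\sigma_i) = n$ then fixes the count of the remaining nodes in each of the four listed configurations (i)--(iv).

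The main obstacle is making the global codimension lower bound for $V^{(\sigma_1,\ldots,\sigma_k)} \subset V_{d,g}$ rigorous, not merely in the local versal picture: the Severi variety can be non-reduced or have components of larger than expected dimension, so one cannot simply pull back a naive count from $|\cO_{\bP^2}(d)|$. I would address this by describing the Zariski tangent space to $V_{d,g}$ at $[C]$ via sections of the equigeneric conormal sheaf along the normalization $\wt C \to C$, and verifying that imposing a prescribed simple singularity type $\sigma$ at an unspecified point cuts out a locus of codimension $c(\sigma)$; the absence of moduli in the $\mu$-constant stratum of a simple singularity reduces this again to the local versal computation.
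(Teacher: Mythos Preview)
The paper does not prove this theorem. It is quoted verbatim as a result of Diaz and Harris (the citation is to \cite[Theorem~1.4]{DiazHarris}) and is used only as motivation for the expectation that a very general polarized $K3$ surface satisfies condition~$(E)$; no argument for it appears anywhere in the text. So there is no ``paper's own proof'' to compare against.

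That said, your sketch is a reasonable outline of how the Diaz--Harris argument actually proceeds: the invariant you denote $c(\sigma)=\mu(\sigma)-\delta(\sigma)$ is exactly the number of ``conditions beyond the $\delta$-drop'' that an equisingular deformation imposes inside the equigeneric locus, and the enumeration $c(A_1)=0$, $c(A_2)=c(A_3)=c(D_4)=1$, $c(\sigma)\ge 2$ otherwise is correct. The substantive content of Diaz--Harris, as you correctly flag in your last paragraph, is precisely the step you call the ``main obstacle'': turning the local versal-deformation codimension count into a global lower bound on the codimension of the equisingular stratum inside $V_{d,g}$. This requires a careful analysis of the tangent space to $V_{d,g}$ and of how the equisingular ideal compares to the adjoint (equigeneric) ideal; it is not formal and occupies most of the original paper. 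Your proposal identifies the right architecture but leaves exactly the hard part as a gap, so as written it is a plausible plan rather than a proof.
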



In view of Theorem~\ref{DH}, condition $(E)$ would be the statement that the family of elliptic curves in $|L|$ when $S$ is general and $\Pic(S) = \bZ \cdot L$   is in general position with respect to the stratification of locally planar curves by singularity and this expectation is reasonable in view of Theorem~\ref{ratnodal} which can be seen as a similar general position statement.

By standard deformation-theoretic arguments, we know that for a general $(S,L)$ and any $0\le g \le \frac{L^2}{2} +1$, a general point in the subspace of $|L|$ parametrizing curves of geometric genus $g$ corresponds to a nodal curve. See also~\cite{ GalatiTriple, GalatiKnutsen} for results pointing to the same direction.

Back to Theorem~\ref{main}, the bridge connecting $c_2(S)$ and Beauville-Voisin's $0$-cycle $o_S$ is the locus $Z$ of cusp points in the one-parameter family of elliptic curves (the precise definition will be given in Section~\ref{step1}). Roughly, there are two main steps in the proof of Theorem~\ref{main}:
\begin{enumerate}[i)]
\item Show that $c_2(S)$ is proportional to $[Z]$ in $\CH_0(S)_{\bQ}$ modulo $0$-cycles supported on rational curves;
\item Prove that $[Z]$ is proportional to $o_S$  in $\CH_0(S)_{\bQ} \colonec \CH_0(S) \otimes \bQ$.
\end{enumerate}
Each step will be carried out respectively in Section~\ref{step1} and the last two sections after we prove a deformation-theoretic result of elliptic curves in $S$ in Section~\ref{smoothhom}. We will prove Theorem~\ref{main} in the last section. 

Varieties are defined to be reduced and irreducible. All schemes and varieties are defined over the field of complex numbers $\bC$. The $K$-group of coherent sheaves (resp. locally free sheaves) on $X$ is denoted by $K_0(X)$ (resp. $K^0(X)$).  For the definitions, notations and basic properties about intersection theory we refer to~\cite{Fulton}. 



\section{Universal deformation space of morphisms from elliptic curves to a fix $K3$ surface}\label{smoothhom}



One can  adopt either the viewpoint of Hilbert scheme or the viewpoint of scheme parametrizing morphisms to study deformations of embedding curves. For our purpose, it is more appropriate to study them \emph{via} the latter, since we will be interested in deformations of curves with fixed geometric genus. Indeed, let
\begin{center}
\begin{tikzpicture}
\centering
\matrix (m) [matrix of math nodes, row sep=1.5em,
column sep=1.5em]
{ \cC &  S  \\
B \\
};
\path[->,font=\scriptsize]
(m-1-1) edge node[left] {} (m-1-2)
(m-1-1) edge node[left] {} (m-2-1);
\end{tikzpicture}
\end{center}
be a family of elliptic curves in $S$, then all $0$-cycles supported on (rational) singular fibers are proportional to $o_S$ in $\CH_0(S)$. Hence we can reduce to study
\begin{center}
\begin{tikzpicture}
\centering
\matrix (m) [matrix of math nodes, row sep=1.5em,
column sep=1.5em]
{ \cC^\circ &  S  \\
B^\circ \\
};
\path[->,font=\scriptsize]
(m-1-1) edge node[left] {} (m-1-2)
(m-1-1) edge node[left] {} (m-2-1);
\end{tikzpicture}
\end{center}
with $B^\circ$ parameterizing smooth fibers. 

 Let $(S,L)$ be a projective polarized $K3$ surface of genus $g \ge 2$ and let $\Defo(f:E \to S)$ denote the universal deformation space of the morphism $f$ from a varying smooth elliptic curve $E$ to $S$.


\begin{lem}\label{Homsmooth}
$\Defo(f)$ is smooth at $[f]$ if the non-immersive locus of $f$ is empty or a reduced point $p \in E$. 
\end{lem}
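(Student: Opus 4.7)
The plan is to apply the standard deformation theory of morphisms with varying source. The tangent space of $\Defo(f)$ at $[f]$ is $\mathbb{H}^0(E, N_f)$ and the obstructions lie in $\mathbb{H}^1(E, N_f)$, where $N_f \colonec [T_E \xrightarrow{df} f^*T_S]$ is the two-term normal complex placed in degrees $-1$ and $0$. Because $f$ is non-constant and $T_E$ is an invertible sheaf on the smooth curve $E$, the kernel of $df$ is a torsion subsheaf of a line bundle, hence zero. Therefore $N_f$ is quasi-isomorphic to $\cN \colonec \coker(df)$ placed in degree $0$, and the problem reduces to analyzing $H^0(E, \cN)$, $H^1(E, \cN)$, and the obstruction map relating them.

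In case (i) (empty non-immersive locus), $\cN$ is a line bundle on $E$ of degree $\deg f^*T_S - \deg T_E = 0$, using $c_1(T_S) = 0$ and $K_E = \cO_E$. In case (ii), a local coordinate computation near the non-immersive point $p$ (where $df$ vanishes to first order) shows that $\cN$ fits in a short exact sequence $0 \to \bC_p \to \cN \to \cN' \to 0$ with $\cN'$ a line bundle and $\chi(\cN') = -1$, i.e., $\deg \cN' = -1$ by an Euler characteristic count.

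The key geometric input is the symplectic form $\omega_S \in H^0(S, \Omega_S^2)$ on the $K3$ surface $S$. Contraction with $\omega_S$ gives an isomorphism $T_S \xrightarrow{\sim} \Omega_S$, and composing with the cotangent map $f^*\Omega_S \to \Omega_E$ yields a morphism $f^*T_S \to \Omega_E$ which vanishes on $df(T_E) \subset f^*T_S$ by the skew-symmetry of $\omega_S$, and hence descends to a canonical morphism $\bar\phi \colon \cN \to \Omega_E$. A direct check shows $\bar\phi$ is an isomorphism where $f$ is immersive and zero at non-immersive points. In case (i) this forces $\cN \cong \Omega_E \cong \cO_E$; in case (ii) it identifies $\cN' \cong \cO_E(-p)$ as a subsheaf of $\Omega_E$. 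In both situations, a standard cohomology computation on the elliptic curve gives $\dim H^0(\cN) = \dim H^1(\cN) = 1$.

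It remains to show the obstruction map $\mathrm{Sym}^2 H^0(\cN) \to H^1(\cN)$ vanishes. Under the $K3$-symplectic identifications above, I expect it to be controlled by the wedge product of holomorphic $1$-forms on $E$, which vanishes since $\wedge^2 \Omega_E = 0$ on the $1$-dimensional variety $E$. The main obstacle is to justify rigorously that the abstract obstruction pairing on $\mathbb{H}^\bullet(N_f)$ indeed corresponds to this wedge product through the symplectic identifications; as a fallback one can try to exhibit a smooth $1$-parameter family of deformations of $f$ directly, using the geometry of the family of elliptic curves in $|L|$ provided by condition $(E)$. Once obstructions vanish, the $1$-dimensional tangent space forces $\Defo(f)$ to be smooth of dimension one at $[f]$.
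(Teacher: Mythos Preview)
Your computation of the normal sheaf $\cN=\coker(df)$ and its cohomology is correct and agrees with the paper: in the unramified case $\cN\cong\cO_E$, and in the ramified case $\cN\cong\cO_E(-p)\oplus\bC_p$, so $h^0(\cN)=h^1(\cN)=1$ in both situations.

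The genuine gap is in how you handle obstructions. Showing that the \emph{primary} obstruction map $\mathrm{Sym}^2 H^0(\cN)\to H^1(\cN)$ vanishes is not enough: with a one-dimensional tangent space, a cubic or higher-order Kuranishi map would still give a non-reduced point, so you would need to kill obstructions to all orders. You acknowledge that identifying the abstract obstruction with the wedge product is the hard part, and indeed you do not carry it out. Your fallback---using the one-parameter family of elliptic curves coming from condition~$(E)$---is not available here: the lemma is stated and used prior to assuming condition~$(E)$, and in any case exhibiting a one-parameter family through $[f]$ does not by itself rule out that $\Defo(f)$ is non-reduced at $[f]$.

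The paper bypasses this entirely by invoking Ran's semi-regularity: the obstructions to deforming $f$ lie not in all of $H^1(\cN)$ but in the kernel of the semi-regularity map $r:H^1(\cN)\to H^2(\cO_S)$. Since $h^1(\cN)=1$ and Ran shows $r\neq 0$, one gets $\ker r=0$ and hence smoothness immediately, with no order-by-order analysis. It is worth noting that your map $\bar\phi:\cN\to\Omega_E$ induced by the symplectic form is, after applying $H^1$ and identifying $H^1(E,\Omega_E)\cong\bC\cong H^2(S,\cO_S)$, precisely the semi-regularity map $r$; and your own description of $\bar\phi$ (isomorphism in case~(i), factoring through $\cO_E(-p)\hookrightarrow\cO_E$ in case~(ii)) already shows $H^1(\bar\phi)$ is injective. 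So you had the right object in hand; the missing step is to recognise that injectivity of $H^1(\bar\phi)$ kills the entire obstruction space at once, rather than trying to control the quadratic part of the Kuranishi map.
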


What we call here the non-immersive locus is the subscheme of $E$ defined by the vanishing of the morphism $f_*:T_E \to f^*T_S$.

\begin{proof}
Let $r: H^1(N_f) \to H^2(\cO_S)$ be the semi-regular map defined in~\cite{Ran} of $f$ where $N_f \colonec \coker\(TE \to f^*TS\)$ is the normal sheaf of $f$. By~\cite{Ran}, it suffices to show that $\ker (r) = 0$. 

 We refer to~\cite[Corollary $4$]{Ran} for the case where $f$ is unramified. If the ramification divisor of $f$ is a reduced point $p$, then $TE\otimes \cO_E(p)$ is the saturation of $TE$ in $f^*TS$. Hence $N_f = \cO_E(-p) \oplus i_p(\bC)$ where $i_p(\bC)$ is the skyscraper sheaf supported on $p$. Thus $h^1(N_f) = 1$. Since it is also shown in the proof of Corollary $3$ of~\cite{Ran} that $r$ is a nonzero map, we conclude that $r$ is injective for dimensional reason. 
\end{proof}

As an immediate consequence, 
\begin{cor}
If $(S,L)$ satisfies condition $(E)$, then $\Defo(f)$ is smooth for any $f$ such that $f_*[E] = c_1(L)$.
\end{cor}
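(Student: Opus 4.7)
The plan is to reduce the corollary directly to Lemma~\ref{Homsmooth} by checking that, under condition $(E)$, any morphism $f \colon E \to S$ with $E$ smooth elliptic and $f_*[E] = c_1(L)$ has non-immersive locus equal to either the empty scheme or a single reduced point of $E$.

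First I would use condition $(E)$ i) to show that $f$ is birational onto its image. Since $\Pic(S) = \bZ \cdot L$, the class $c_1(L)$ is primitive in $\Pic(S)$; if $f$ factored with degree $d$ through its image $C \colonec f(E)$, then $d \cdot [C] = f_*[E] = c_1(L)$ would force $d = 1$. Thus $f$ realizes $E$ as the normalization of $C$, and $C$ is an integral member of $|L|$ of geometric genus $1$, so condition $(E)$ ii) applies to $C$.

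Next I would translate condition $(E)$ ii) into a statement about the vanishing locus of $f_* \colon TE \to f^*TS$. At an ordinary $m$-fold point of $C$, the local branches are smooth and meet transversally, so on the normalization $E$ the map $f$ is immersive at each of the $m$ preimages and contributes nothing to the non-immersive locus. At an $A_2$ cusp of $C$, modeled analytically by $y^2 = x^3$ with normalization $t \mapsto (t^2, t^3)$, the differential vanishes to order exactly $1$ at the unique preimage, producing a reduced non-immersive point on $E$. Hence the non-immersive locus of $f$ equals the preimage of the set of cusps of $C$; by condition $(E)$ ii) this is either empty (if $C$ has no cusp) or a single reduced point (if $C$ has exactly one $A_2$ cusp). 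Lemma~\ref{Homsmooth} then gives smoothness of $\Defo(f)$ at $[f]$.

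I do not expect any serious obstacle: the whole argument is a dictionary between the singularity types allowed by condition $(E)$ and the scheme-theoretic structure of the vanishing locus of $f_*$ on the normalization. The only computation to carry out is the standard local one at the $A_2$ cusp, which shows that the ramification divisor is reduced of length $1$.
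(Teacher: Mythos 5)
Your proposal is correct and follows exactly the route the paper intends: the corollary is stated as an immediate consequence of Lemma~\ref{Homsmooth}, and your argument supplies precisely the missing dictionary (primitivity of $L$ forces $f$ to be birational onto an integral member of $|L|$, and condition $(E)$ ii) then guarantees the non-immersive locus is empty or a single reduced point via the standard $t \mapsto (t^2,t^3)$ computation at the $A_2$ cusp). No gaps.
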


Let us now give a local description of the universal morphism of $\Defo(f)$ where $f : E \to S$ is a map whose non-immersive locus is a reduced point $p \in E$.

\begin{lem}
With a suitable choice of local coordinates at $p$, the universal morphism parametrized by $\Defo(f)$ is locally given by 
\begin{equation}\label{coordcusp}
f_\univ: (x,y) \mapsto \(x +y^2,y^3 + \frac{3}{2}xy\),
\end{equation} 
where $x$ is the coordinate parametrizing $\Defo(f)$ and $y$ is a local vertical coordinate for the family of curves $\cC \to \Defo(f)$.
\end{lem}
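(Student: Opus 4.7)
The plan is to put $f$ into a standard local form at $p$, exhibit the stated formula as an explicit one-parameter deformation with nonzero Kodaira--Spencer map, and then invoke the smoothness of $\Defo(f)$ (Lemma~\ref{Homsmooth}) together with the $1$-dimensionality of its tangent space to conclude that this deformation locally realizes the universal one in suitable coordinates. First I would reduce $f$ to the normal form $y \mapsto (y^2, y^3)$: the hypothesis that the non-immersive locus at $p$ is a reduced point means, in view of condition $(E)$, that the image of $f$ near $f(p)$ is analytically the cuspidal germ $\{v^2 = u^3\}$ in some coordinates $(u, v)$ on $S$; since $E$ is smooth, $f$ factors locally through the normalization of this cusp, which is $y \mapsto (y^2, y^3)$ in an appropriate local coordinate $y$ on $E$ at $p$.

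Next I would consider the explicit family $F(x, y) \colonec (x + y^2,\, y^3 + \frac{3}{2} xy)$, which satisfies $F(0, y) = f(y)$. Its Kodaira--Spencer class at $[f]$ is represented by $\partial_x F|_{x = 0} = (1,\, \frac{3}{2} y) \in \Gamma(f^* T_S)$ near $p$. Modulo the image of $f_*\colon T_E \to f^* T_S$, locally generated by $f'(y) = (2y,\, 3y^2)$, the class of this section is nonzero at $p$: its $u$-component $1$ at $y = 0$ cannot be cancelled by any $\cO_E$-multiple of $(2y,\, 3y^2)$, whose $u$-component lies in the ideal $(y)$. Hence the class projects nontrivially onto the skyscraper summand $i_p(\bC)$ in the splitting $N_f \cong \cO_E(-p) \oplus i_p(\bC)$ established in the proof of Lemma~\ref{Homsmooth}, yielding a nonzero element of $H^0(N_f) \cong \bC$.

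By Lemma~\ref{Homsmooth}, $\Defo(f)$ is smooth of dimension $h^0(N_f) = 1$ at $[f]$. The Kodaira--Spencer map $T_0 \bC_x \to H^0(N_f)$ attached to $F$ is then a nonzero linear map between $1$-dimensional spaces, hence an isomorphism; by smoothness of $\Defo(f)$, this forces the classifying map $g \colon (\bC_x, 0) \to (\Defo(f), [f])$ associated to $F$ to be a local biholomorphism, and pulling back the universal morphism along $g$ recovers $F$. Taking $x$ as the coordinate on $\Defo(f)$ induced by $g$, and $y$ as a vertical coordinate on the universal curve provided by local triviality of the smooth family near the smooth point $p$, yields the claimed local expression of the universal morphism.

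The main obstacle is the reduction to the normal form $(y^2, y^3)$, which rests on the standard analytic classification of $A_2$ cusps and the identification of $f$ with the normalization of its image near $p$; the remainder is a direct Kodaira--Spencer verification combined with the universal property of smooth moduli problems. As a geometric consistency check, for $x \ne 0$ one can eliminate $y$ in $F$ to obtain the image curve $v^2 = (u - x)(u + x/2)^2$, a nodal cubic whose parametrization $F(x, \cdot)$ smooths the cusp $v^2 = u^3$ at $x = 0$; this matches the equigeneric smoothing expected for elliptic curves in $|L|$, since the geometric genus is preserved.
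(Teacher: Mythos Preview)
Your proof is correct and follows essentially the same approach as the paper: normalize $f$ to $t\mapsto (t^2,t^3)$, identify the generator of $H^0(N_f)$ with (the class of) $\lambda=(2,3t)$, and use smoothness and $1$-dimensionality of $\Defo(f)$ to match the explicit family with the universal one. The paper argues from the universal side (its Kodaira--Spencer--Horikawa image must be proportional to $\lambda$, ``and the lemma follows''), whereas you argue from the explicit side (the family $F$ has nonzero Kodaira--Spencer class, hence its classifying map is a local isomorphism); these are dual formulations of the same idea, and your version spells out the concluding versality step more explicitly than the paper does.
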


 For simplicity, any local coordinates in which $f_\univ$ is given by~(\ref{coordcusp}) are called \emph{local coordinates~(\ref{coordcusp})}.

\begin{proof}
First we choose coordinates $(x,y)$ in an analytic neighborhood of $f(p)$ such that $f$ is given by 
$$t \mapsto (t^2,t^3)$$
where $t$ is a local coordinate on $E$ in an analytic neighborhood of $p$. The differential $df$ is given by
$$\frac{\dr}{\dr t} \mapsto t\(2\frac{\dr}{\dr x} + 3t\frac{\dr}{\dr y}\).$$
Denote $\gl(t) \colonec \(2\frac{\dr}{\dr x} + 3t\frac{\dr}{\dr y}\)$; on one hand we have that the torsion subsheaf $N_\tors$ of $N_f$ is generated by the section $\gl$. On the other hand, let $U \subset E$ be an analytic neighborhood of $p$, the normal sheaf of $f_{|U}$ is $(N_f)_{|U} = \cO_{E\cap U}(-p)\oplus i_p(\bC)$ and the induced map $H^0(E,N_f) \to H^0(U,N_f) \to H^0(U, i_p(\bC))$ is an isomorphism. Thus the image of $\frac{\dr}{\dr t}$ under the Kodaira-Spencer-Horikawa map is nonzero and is proportional to $\gl$, and the lemma follows.
\end{proof}



\section{Cusp locus and $c_2(S)$}\label{step1}

As before, let $(S,L)$ be a primitively polarized $K3$ surface. Since there is a one-dimensional family of elliptic curves in $|L|$~\cite{MM}, there exists a (maximally defined) dominant rational map $f : \gS \dto S$ from an elliptic surface $\pi : \gS \to \ol{B}$ over a \emph{smooth} projective curve $\ol{B}$ to $S$ such that  $f$ is well-defined on a general fiber of $\pi$ and the image of these fibers under $f$ are elliptic curves in $|L|$. Let 
$$B \colonec \left\{ x \in \ol{B} \mid f \text{ is well-defined along } \gS_x \colonec \pi^{-1}(x) \text{ and } \gS_x \text{ is a smooth elliptic curve } \right\}$$ 
which is  Zariski open in $\ol{B}$. 

From now on, we assume that $(S,L)$ satisfies condition $(E)$ (\emph{cf.} Definition~\ref{hypE}). 

\begin{lem}\label{isoloc}
For any $b \in B$, $B$ is locally isomorphic to the universal deformation space of $f_{|\gS_x}$ and $f$ its universal morphism.
\end{lem}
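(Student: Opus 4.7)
The plan is to construct via the universal property a classifying morphism $\varphi : (B, b) \to \Defo(f_{|\gS_b})$ of pointed germs, and then to show that $\varphi$ is a local isomorphism. Under condition $(E)$, the singularities of any elliptic curve in $|L|$ are ordinary multiple points or cusps, so the non-immersive locus of $f_{|\gS_b}$ is either empty or a reduced point, and the corollary to Lemma~\ref{Homsmooth} says that $\Defo(f_{|\gS_b})$ is smooth at $[f_{|\gS_b}]$. An analytic neighborhood $U \subset B$ of $b$, together with the restricted family $\pi^{-1}(U) \to U$ and the map $\pi^{-1}(U) \to S$ induced by $f$, is a deformation of $f_{|\gS_b}$ as a morphism from a smooth elliptic curve to $S$; this deformation provides the required $\varphi$.

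Next I would show the target is a smooth one-dimensional germ. Its tangent space at $[f_{|\gS_b}]$ is $H^0(\gS_b, N_f)$ with $N_f = \coker(T\gS_b \to f^*TS)$. In the cuspidal case the proof of the previous lemma gives $N_f = \cO_{\gS_b}(-p) \oplus i_p(\bC)$, hence $h^0(N_f) = 1$. In the unramified case, $N_f$ is a line bundle whose degree vanishes (since $K_S = 0$ and $K_{\gS_b} = 0$); since a degree zero line bundle on the elliptic curve $\gS_b$ has nontrivial global sections only if it is trivial, it suffices to show $H^0(N_f) \ne 0$, which will follow from the next step. Thus both $(B,b)$ and $(\Defo(f_{|\gS_b}),[f_{|\gS_b}])$ will be smooth one-dimensional germs.

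Finally I would verify that $d\varphi_b \ne 0$. This differential is the Kodaira-Spencer-Horikawa class of the family $\pi^{-1}(U) \to U$ with its map to $S$, and it factors the differential at $b$ of the morphism $\psi : B \to |L|$, $b' \mapsto [f(\gS_{b'})]$, via the natural map $\Defo(f_{|\gS_b}) \to |L|$ sending a deformation to the image of its elliptic curve. Since $f : \gS \dto S$ is dominant and generically finite, the image curves $f(\gS_{b'})$ cover a dense subset of $S$, so $\psi(B)$ is a one-dimensional subvariety of $|L|$ and $\psi$ is unramified away from a finite set; shrinking $B$ if necessary, we may assume $d\psi_b \ne 0$ for every $b \in B$, so that $d\varphi_b \ne 0$. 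A non-zero tangent map between smooth one-dimensional germs is a local isomorphism, so $\varphi$ is a local isomorphism, and by construction the pullback of the universal morphism of $\Defo(f_{|\gS_b})$ is $f$. The only genuinely non-formal point is the non-vanishing of $d\varphi_b$, which reduces to the one-dimensionality of $\psi(B) \subset |L|$.
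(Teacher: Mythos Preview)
Your argument follows the same route as the paper's: invoke condition $(E)$ and Lemma~\ref{Homsmooth} to get smoothness of $\Defo(f_{|\gS_b})$, then show the classifying map $\varphi$ is a local isomorphism by exhibiting a non-trivial first-order deformation. The paper compresses all of this into two sentences, appealing to the fact that the fibration $\gS \to B$ is not locally iso-trivial (so the composite $B \to \Defo(f_{|\gS_b}) \to \Defo(\gS_b)$ is non-constant), whereas you factor through $\psi : B \to |L|$ instead; these are two sides of the same coin, and your version has the virtue of making the dimension count for $H^0(N_f)$ explicit.

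One point deserves care. Your argument for $d\varphi_b \ne 0$ goes through $d\psi_b \ne 0$, which you obtain only generically and then handle by ``shrinking $B$ if necessary.'' But $B$ is fixed in the paper, and the lemma is later applied precisely at the finitely many cuspidal fibers (in Lemma~\ref{simpram} and Proposition~\ref{id}), so you cannot afford to discard them. The paper's terse proof glosses over the same issue. A cleaner way to close this at a cuspidal fiber is to note that $d\varphi_b$ \emph{is} the Kodaira--Spencer--Horikawa class of the family over $B$, landing in $H^0(\gS_b,N_f)\simeq H^0(i_p(\bC))\simeq\bC$; its vanishing would force the image of $df|_{\gS_b}:T_\gS|_{\gS_b}\to f^*T_S|_{\gS_b}$ to coincide with that of $T_{\gS_b}$, i.e.\ $\gS_b$ lies entirely in the ramification divisor of $f:U\to S$, which one can rule out directly (only finitely many fibers are ramification components, and at those one checks by hand, or one appeals to the non-iso-triviality of the abstract family at $b$). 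Either way, the ``shrinking'' should be replaced by a pointwise argument.
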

\begin{proof}
Since $(S,L)$ satisfies condition $(E)$, applying Lemma~\ref{Homsmooth} to $f_{|\gS_b}$ for each $b \in B$ yields the smoothness of the universal deformation space of $f_{|\gS_b}$ for all $b \in B$. Since the fibration $\pi_{|\gS} : \gS \to B$ is not locally iso-trivial, the lemma is proven.  
\end{proof}

Let $\tilde{f} : \wt{\gS} \to S$  be the minimal resolution of $f$ such that $\wt{\gS}$ is also smooth.  We summarize the situation by the following diagram:
\begin{center}
\begin{tikzpicture}
\centering
\matrix (m) [matrix of math nodes, row sep=1.5em,
column sep=1.5em]
{  & \wt{\gS} &  \\
\gS & &  S \\
\ol{B}  \\};
\path[->,font=\scriptsize]
(m-1-2) edge node[left] {$\tau$} (m-2-1)
edge[bend right = 80] node[left] {$\tilde{\pi}$} (m-3-1)
edge node[auto] {$\tilde{f}$} (m-2-3)
(m-2-1) edge node[auto] {$\pi$} (m-3-1);
\path[densely dashed, ->,font=\scriptsize]
(m-2-1) edge node[auto] {$f$} (m-2-3);
\end{tikzpicture}
\end{center}

Since $\Pic(S) = \bZ$ which is generated by the class $\tilde{f}(\wt{\gS}_b)$ for any $b \in \ol{B}$, $\tilde{f}(\wt{\gS}_b)$ is irreducible for all $b \in \ol{B}$. Thus there are three types of fibers of $\tilde{\pi}$:
\begin{enumerate}[i)]
\item smooth elliptic curves;
\item irreducible rational curves;
\item union of a  elliptic curve contracted by $\tilde{f}$ and a rational curve.
\end{enumerate}
Let $U$ be the complement of type $(ii)$ and $(iii)$ fibers in $\wt{\gS}$. Note that since by definition $U = \pi^{-1}(B)$,  $\tau_{|\tau^{-1}(U)}$ is isomorphic onto its image and  $f$ is well-defined on $U$. For simplicity, we will identify $\tau^{-1}(U)$ with $U$ in the rest of the paper.

\begin{lem} \label{mod}
If $z \in \CH_0(\wt{\Sigma})$ is a $0$-cycle supported on $\wt{\gS} - U$, then $\tilde{f}_*z$ is supported on rational curves and thus proportional to $o_S$. 
\end{lem}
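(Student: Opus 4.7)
The plan is to analyze the support of $\tilde{f}_*z$ by decomposing $\wt{\gS} - U$ into the irreducible components of the fibers of types (ii) and (iii) listed just above, pushing forward one component at a time, and invoking the Beauville-Voisin definition of $o_S$ to conclude.

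First I would treat the type (ii) fibers, i.e. fibers $\wt{\gS}_b$ which are by definition irreducible rational curves. By the observation made just before the lemma (using $\Pic(S) = \bZ \cdot L$), the image $\tilde{f}(\wt{\gS}_b)$ is an irreducible element of $|L|$; being dominated by $\bP^1$, it is itself a (possibly singular) rational curve in $S$. Every $0$-cycle supported on such a fiber therefore pushes forward to a $0$-cycle supported on a rational curve of $S$.

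Next I would treat a type (iii) fiber $\wt{\gS}_b = E_b \cup R_b$, where $E_b$ is elliptic and contracted by $\tilde{f}$, and $R_b$ is rational. The rational component $R_b$ is handled exactly as in the type (ii) case. For the elliptic component $E_b$, the image $\tilde{f}(E_b)$ is a single point $p \in S$; since the fiber $\wt{\gS}_b$ is connected, $E_b$ meets $R_b$, and hence $p$ lies on the rational curve $\tilde{f}(R_b)$. So any $0$-cycle supported on $E_b$ also pushes forward to a point on a rational curve.

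Combining the two cases, $\tilde{f}_*z$ is a $0$-cycle of $S$ supported on (possibly singular) rational curves. By the Beauville-Voisin theorem, more precisely by the definition of $o_S$ as the class of any point on any rational curve of $S$, such a $0$-cycle is an integer multiple of $o_S$, and in particular proportional to it. The only even mildly delicate point is the contracted elliptic component in a type (iii) fiber, which is settled by connectedness of the fiber; the rest is a direct case analysis.
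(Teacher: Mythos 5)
Your proof is correct and follows essentially the same route as the paper's (which is a one-sentence version of your case analysis): type (ii) fibers map to irreducible rational curves, and in a type (iii) fiber the contracted elliptic component lands on the image of its rational component, so every point of $\wt{\gS} - U$ maps into a rational curve of $S$. Your explicit appeal to connectedness of the fiber to place the contracted point on $\tilde{f}(R_b)$ is a detail the paper leaves implicit, but it is the right justification.
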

\begin{proof}
Indeed, since the elliptic component in a type $(iii)$ fiber is contracted to a point, the images under $\tilde{f}$ of fibers of type $(ii)$ and $(iii)$ are rational curves. 
\end{proof}

\begin{lem}\label{ramfib}
The ramification divisor of $\tilde{f}$ is supported on fibers of $\tilde{\pi}$. In particular, the restriction of $\tilde{f}$ on a general fiber of $\tilde{\pi}$ is an immersion.
\end{lem}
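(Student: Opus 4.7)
The plan is to apply the Riemann--Hurwitz formula to the morphism $\tilde{f} : \wt{\gS} \to S$, which is generically finite since it is dominant (by construction, $\tilde{f}$ resolves the dominant rational map $f$) and both sides have dimension two. Riemann--Hurwitz produces an effective ramification divisor $R$ on $\wt{\gS}$ with $K_{\wt{\gS}} = \tilde{f}^* K_S + R$. Because $S$ is a $K3$ surface, $K_S = 0$, and hence $R$ is linearly equivalent to $K_{\wt{\gS}}$.

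Next I would compute $R \cdot F$ for $F$ a general fiber of $\tilde{\pi}$. Such an $F$ is a smooth elliptic curve with $F^2 = 0$, so adjunction gives $K_{\wt{\gS}} \cdot F = 2g(F) - 2 - F^2 = 0$, whence $R \cdot F = 0$.

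The first assertion of the lemma then follows from the general principle that an effective divisor pairing to zero with a general fiber of a surface fibration must be vertical. Explicitly, if $R$ admitted an irreducible component $C$ dominating $\ol{B}$ via $\tilde{\pi}$, then $C \cdot F > 0$ for a general $F$, while vertical components intersect a general fiber trivially (distinct fibers being set-theoretically disjoint); effectiveness of $R$ would then force $R \cdot F > 0$, contradicting $R \cdot F = 0$. Hence every irreducible component of $R$ is contained in some fiber of $\tilde{\pi}$.

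The ``in particular'' statement then follows immediately: $R$ being supported on only finitely many fibers, a general fiber $F$ is disjoint from the support of $R$; by the very definition of the ramification divisor, $d\tilde{f}$ is injective at every point of $F$, so $\tilde{f}|_F$ is an immersion. The substantive content is really a single observation---that the $K3$ hypothesis $K_S = 0$ forces the ramification class to have degree zero on a general fiber---after which the proof reduces to a short intersection-theoretic computation, so I do not expect any genuine obstacle.
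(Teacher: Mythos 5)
Your proof is correct and takes essentially the same route as the paper: both identify the ramification divisor of $\tilde{f}$ as the zero locus of a section of $K_{\wt{\gS}} \otimes \tilde{f}^*K_S^{-1} = K_{\wt{\gS}}$ and exploit that this line bundle is trivial on the smooth elliptic fibers. The paper phrases the last step by restricting the determinant of $\tilde{f}^*\gO_S \to \gO_{\wt{\gS}}$ to a smooth fiber $E$ and observing that a section of $\cO_E$ vanishes either nowhere or identically, whereas you take degrees ($R \cdot F = K_{\wt{\gS}} \cdot F = 0$); these are two phrasings of the same computation.
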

The last statement of the above lemma can also be deduced easily from the well-known fact that a general elliptic curve in $|L|$ is nodal since $L$ is primitive.

\begin{proof}[Proof of Lemma~\ref{ramfib}]
Since $\wt{\gS}$ and $S$ are smooth, the pullback morphism $\tilde{f}^* : \tilde{f}^*\gO_S \to \gO_{\wt{\gS}}$ is injective. If $E$ is any smooth fiber of $\tilde{\pi}$, one has 
$$\det(\tilde{f}^*\gO_S)_{|E} = \det{\gO_{\wt{\gS}}}_{|E} = \cO_E.$$
So the map $\tilde{f}^*_{|E} : {\tilde{f}^*\gO_S}_{|E} \to {\gO_{\wt{\gS}}}_{|E}$ is either an isomorphism or  everywhere degenerated, which proves the lemma.
\end{proof}

\begin{rem}
 We also deduce that since $\tilde{f}^*c_2(S) = c_2(\wt{\gS}) - c_2(Q)$ where $Q$ is the cokernel of $ {\tilde{f}^*\gO_S} \to {\gO_{\wt{\gS}}}$ (because $c_1(S) = 0$), modulo $0$-cycles supported on rational curves the second Chern class of $S$ is supported on non-immersive elliptic curves in $|L|$ by Remark~\ref{mod}. In fact we will rather use formula~(\ref{ideq}) in Proposition~\ref{id} below which is closely related.
\end{rem}

In order to analysis the part of $c_2(S)$ which is supported on these non-immersive elliptic curves, let us first describe the ramification locus of $f_{|U}$ for $(S,L)$ satisfying condition $(E)$.

\begin{lem}\label{simpram}
If $(S,L)$ satisfies condition $(E)$, then the ramification locus of $f_{|U}$ is reduced, and is supported on fibers of $\pi$ whose image under $f$ is a cuspidal elliptic curve.
\end{lem}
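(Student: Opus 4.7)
The plan is to apply Lemma~\ref{ramfib} to reduce the ramification divisor $R$ of $f_{|U}$ to a sum of fibers, and then to work fiber by fiber: the local normal form~(\ref{coordcusp}) handles the cuspidal fibers, while a Kodaira--Spencer computation handles the immersive ones. By Lemma~\ref{ramfib}, we may write $R = \sum_b n_b\, \gS_b$ with $n_b \ge 0$ and only finitely many $n_b$ nonzero, so it suffices to determine $n_b$ for each $b \in B$. By condition $(E)$ together with Lemma~\ref{Homsmooth}, the non-immersive locus of $f_{|\gS_b}$ is either empty, in which case $f(\gS_b)$ has only ordinary multiple points, or a single reduced point $p \in \gS_b$ mapping to the unique cusp of $f(\gS_b)$.

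\textbf{Cuspidal fibers.} Suppose $f_{|\gS_b}$ has a non-immersive point $p$. By Lemma~\ref{isoloc}, in a neighborhood of $p$ we may choose local coordinates~(\ref{coordcusp}) in which $f_{|U}$ takes the form $(x,y) \mapsto (x + y^2,\ y^3 + \tfrac{3}{2} xy)$ and $\gS_b = \{x = 0\}$. A direct computation yields
$$\det(df) = \det\begin{pmatrix} 1 & 2y \\ \tfrac{3}{2}y & 3y^2 + \tfrac{3}{2}x \end{pmatrix} = \tfrac{3}{2}\, x,$$
so in this chart $R$ coincides with $\gS_b$ set-theoretically and with multiplicity exactly $1$. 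Hence $n_b = 1$.

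\textbf{Immersive fibers.} Assume $f_{|\gS_b}$ is an immersion. From the short exact sequence
$$0 \to T_{\gS_b} \to f_{|\gS_b}^* T_S \to N_{f|_{\gS_b}} \to 0,$$
the triviality of $K_S$, and the ellipticity of $\gS_b$, we see that $N_{f|_{\gS_b}}$ is a line bundle of degree $0$. By Lemma~\ref{isoloc}, $\Defo(f_{|\gS_b})$ is locally isomorphic to the smooth curve $B$ and hence smooth of dimension $1$, so $h^0(N_{f|_{\gS_b}}) = 1$, which forces $N_{f|_{\gS_b}} \cong \cO_{\gS_b}$. Lemma~\ref{isoloc} further implies that the Kodaira--Spencer map $T_b B \to H^0(\gS_b, N_{f|_{\gS_b}}) = \bC$ is an isomorphism, so its image is a nowhere-vanishing constant section. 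At each $p \in \gS_b$ this says that the image under $df$ of any direction in $T_p U$ transverse to the fiber lies outside $df(T_p \gS_b)$; combined with immersiveness, this forces $\mathrm{rk}(df_p) = 2$, so $p \notin R$ and $n_b = 0$.

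Putting the two cases together, $R = \sum_{b \in B_\cusp} \gS_b$, where $B_\cusp \subset B$ is the finite set of $b \in B$ for which $f(\gS_b)$ is cuspidal; this is a reduced divisor supported on fibers with cuspidal image, as required. I expect the main difficulty to lie in the immersive case: one has to chain Lemmas~\ref{Homsmooth} and~\ref{isoloc} to pin $N_{f|_{\gS_b}}$ down as trivial, after which the nowhere-vanishing of the Kodaira--Spencer section (and hence the full rank of $df$) becomes automatic.
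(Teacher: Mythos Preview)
Your proof is correct and follows the same approach as the paper: local coordinates~(\ref{coordcusp}) for the cuspidal fibers, and the Kodaira--Spencer map for the immersive ones. Your treatment of the immersive case is slightly more explicit---you pin down $N_{f_{|\gS_b}} \cong \cO_{\gS_b}$ so that the Kodaira--Spencer section is nowhere-vanishing, whereas the paper only notes it is a nonzero element of $H^0$ and then (implicitly, via Lemma~\ref{ramfib}) concludes that the fiber is disjoint from $R$.
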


\begin{proof}
Let $E$ be a fiber of $\pi_{|U} : U \to B$ such that $f_{|E}$ is an immersion. Assume that in an analytic neighborhood of  $z \in E$,  $df : TU \to f^*TS$  is given by
$$(\phi_1,\phi_2) : \pi^*TB \times TE \to f^*TS.$$
Since $f_{|E}$ is an immersion, $\phi_2$ is nonzero. Moreover, since the $\Defo(f_{|E})$ is smooth and $f_{|U}$ is the universal morphism of $\Defo(f_{|E})$ in an analytic neighborhood of $E$, ${\phi_1}_{|E} : \pi^*TB_{|E} \to f^*TS$ is the restriction of a nonzero element in $H^0(E,N_{f_{|E}})$, so $\det (\phi_1,\phi_2)$ is not identically zero. Hence  $f$ is unramified at immersive fibers.

If $f_{|E}$ ramifies at $z$, then $f(E)$ is a curve with one cusp. By Lemma~\ref{isoloc} we can work with local coordinates~(\ref{coordcusp}) $(x,y)$ in $U$ at $z \in E$  so that the first projection coincides with $\pi$ and that $f$ is given by
$$(x,y) \mapsto \(x +y^2,y^3 + \frac{3}{2}xy\)$$
in these local coordinates. We immediately verify that the ramification divisor along $E$ is reduced.
\end{proof}

For all $p \in \ol{B}$, let $\gS_p$ denote the fiber of $\pi$ over $p$. Since a general member of the family of elliptic curves determined by $|L|$ is a nodal curve, the points $z \in U$ such that $f(z)$ is a cusp in $f(\gS_{\pi(z)})$ form a discrete set $Z_\cusp \subset U$; we define the \emph{cusp locus} to be the $0$-dimensional reduced subscheme of $U$ whose underlying set is $Z_{\cusp}$.


\begin{pro}\label{id}
The following identity holds in $\CH_0(S) / \bZ o_S$
\begin{equation}\label{ideq}
(\deg f) \cdot c_2(S) = \tilde{f}_* [Z_\mathrm{cusp}].
\end{equation}
\end{pro}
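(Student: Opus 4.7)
My plan is to derive the identity from the short exact sequence
$$0 \to \tilde f^*\gO_S \to \gO_{\wt\gS} \to Q \to 0$$
and the multiplicativity of total Chern classes. Since $c_1(\gO_S)=0$, one has $\tilde f^* c_2(S) = c_2(\wt\gS) - c_2(Q)$, and pushing forward by $\tilde f_*$ yields
$$(\deg f)\cdot c_2(S) = \tilde f_* c_2(\wt\gS) - \tilde f_* c_2(Q).$$
By Lemma~\ref{mod}, any class supported on $\wt\gS - U$ pushes forward to a multiple of $o_S$, so it suffices to establish in $\CH_0(U)$ the two identities $c_2(\wt\gS)|_U = 0$ and $c_2(Q)|_U = -[Z_\cusp]$.

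The first is standard for smooth elliptic fibrations: from the relative tangent sequence $0 \to T_{U/B} \to T_U \to \tilde\pi^* T_B \to 0$ one has $c_2(T_U) = c_1(T_{U/B}) \cdot c_1(\tilde\pi^* T_B)$; writing $c_1(\tilde\pi^* T_B)$ as a $\bZ$-combination of fibers $F$ and noting $c_1(T_{U/B})|_F = c_1(T_F) = 0$ in $\Pic(F)$ (elliptic fibers have trivial tangent bundle), each intersection $c_1(T_{U/B})\cdot [F]$ vanishes in $\CH_0(U)$.

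The main step is the computation of $c_2(Q)|_U$. By Lemma~\ref{simpram}, the support $R$ of $Q|_U$ is the union of those smooth elliptic fibers of $\tilde\pi$ whose image under $\tilde f$ is a cuspidal curve, and by condition~$(E)$ each such fiber contains exactly one cusp. In the local coordinates~(\ref{coordcusp}) around a cusp one has $R = \{x=0\}$ locally, and the image $I$ of $\tilde f^*\gO_S \to \gO_U$ equals $\langle dx+2y\,dy,\ (3x/2)\,dy\rangle$; hence $I|_R$ is the rank-one subsheaf of $\gO_U|_R$ generated by $dx+2y\,dy$. Two facts are then immediate from this model: (a) $I|_R \cap N^*_{R/U} = 0$, since $I|_R$ differs from $N^*_{R/U} = \cO_R\cdot dx$ by $2y\,dy$ and $2y$ is a non-zero-divisor on $R$; (b) the composition $I|_R \hookrightarrow \gO_U|_R \twoheadrightarrow \gO_R$ sends $dx+2y\,dy \mapsto 2y\,dy$, whose image is the ideal sheaf $\cO_R(-Z_\cusp) \subset \gO_R = \cO_R\cdot dy$ (the factor "$2y$" vanishes to order one at the unique cusp in each fiber). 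Applying the Snake lemma to the conormal sequence $0 \to N^*_{R/U} \to \gO_U|_R \to \gO_R \to 0$ equipped with the subsheaf $I|_R$, and using $N^*_{R/U} = \cO_R$ (fibers have trivial normal bundle), we obtain the short exact sequence of sheaves on $U$
$$0 \to \cO_R \to Q|_U \to \cO_{Z_\cusp} \to 0.$$
Multiplicativity of $c(-)$, together with $c_1(\cO_R)=[R]$, $c_2(\cO_R) = [R]^2 = 0$ (as $R$ is a disjoint union of fibers), $c_1(\cO_{Z_\cusp})=0$ and $c_2(\cO_{Z_\cusp}) = -[Z_\cusp]$ (from Grothendieck--Riemann--Roch applied to skyscraper sheaves), then gives $c_2(Q|_U) = [R]^2 - [Z_\cusp] = -[Z_\cusp]$, as desired.

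The principal obstacle is the construction of the above global short exact sequence for $Q|_U$ with the correct quotient $\cO_{Z_\cusp}$. This relies crucially on condition~$(E)$, which forces each component of $R$ to contain exactly one cusp so that the local factor "$2y$" has a single simple zero per fiber, and on Lemma~\ref{isoloc}, which provides a uniform local description of $\tilde f$ near every cusp via coordinates~(\ref{coordcusp}).
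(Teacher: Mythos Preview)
Your argument is correct and reaches the same conclusion, but the route differs from the paper's. The paper does not split $\tilde f^*c_2(S)$ as $c_2(\wt\gS)-c_2(Q)$; instead it uses the identity $c_2(\tilde f^*T_S)|_U = c_2\bigl(\gO_{U/B}\otimes \tilde f^*T_S\bigr)|_U$ (valid because $c_1(T_S)=0$ and $c_1(\gO_{U/B})\in\pi^*\Pic(B)$ squares to zero), and then realizes that $c_2$ as the zero locus of the section of $\gO_{U/B}\otimes \tilde f^*T_S$ obtained from the vertical part $T_{U/B}\hookrightarrow T_U\xrightarrow{df} f^*T_S$. In local coordinates this section is $(\partial_y g,\partial_y h)$; it is nowhere zero at immersive points, and in the model~(\ref{coordcusp}) one reads off $(2y,\,3y^2+\tfrac32 x)$, whose zero scheme at the origin has length~$1$. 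So the paper gets $c_2(\tilde f^*T_S)|_U=[Z_\cusp]$ in one stroke. Your approach instead gives a structural description of the cokernel $Q$ on $U$---the extension $0\to \cO_R\to Q\to \cO_{Z_\cusp}\to 0$---which is more informative but requires more bookkeeping. Two small points: what you call ``$I|_R$'' is really the \emph{image} of $I\otimes\cO_R\to \gO_U|_R$ (the honest restriction $I|_R$ has rank~$2$), and your claims (a)--(b) should be checked away from the cusp on each component of $R$ as well; there they are immediate since $f|_E$ is an immersion at such points by condition~$(E)$, so the composite $\overline I\to\gO_R$ is already an isomorphism. With these clarifications your Snake-lemma extension and the Chern-class computation $c_2(Q)|_U=-[Z_\cusp]$ go through, and the rest is as in the paper via Lemma~\ref{mod}.
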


\begin{proof}
Recall that the fibers of $\pi$ in $U$ are smooth elliptic curves, so $c_1(\gO_{U/B}) = \pi^*L$ for some $L \in \Pic(B)$. Since $c_1(\tilde{f}^*TS)_{|U} = 0$, one has 
$$c_2(\tilde{f}^*TS)_{|U} = c_2\(\gO_{U/B} \otimes \(\tilde{f}^*TS\)_{|U}\),$$
which is determined by the scheme-theoretic vanishing locus $V(s)$ of the section $s$ of $\gO_{U/B} \otimes f_{|U}^*TS$ induced by $df : TU \to f_{|U}^*TS$. Suppose that $f$ is given by $(x,y) \mapsto (g(x,y),h(x,y))$ in a local coordinates near $p \in U$ such that the first projection $x \mapsto (x,y)$ coincides with $\pi : \gS\to \ol{B}$. Then the degree  of $c_2(f_{|U}^*T_S)$ supported on $p$ is equal to the vanishing order at $(0,0)$ of $(x,y) \mapsto (\dr_y g, \dr_y h)$, which is $0$ if $f_{\pi(p)} \colonec f_{|\gS_{\pi(p)}}$ is an immersion at $p$.

If $f_{\pi(p)}$ ramifies at $p$, then $f(p)$ is a cusp of $f(\gS_{\pi(p)})$ by hypothesis. Using local coordinates~(\ref{coordcusp}) at $p$, the map $f$ is given by
$$(x,y) \mapsto \(x +y^2,y^3 + \frac{3}{2}xy\).$$ 
This implies that the length of $V(s)$ at $p$ is $1$, so $c_2(f_{|U}^*T_S) = [Z_\cusp]$ in $\CH_0(U)$. Hence by Lemma~\ref{mod}
$$(\deg f) \cdot c_2(S) = \tilde{f}_* \tilde{f}^* c_2(S)= \tilde{f}_*\([Z_\cusp]\)$$
in $\CH_0(S) / \bZ o_S$.

\end{proof}

In the next section, we will give a rigorous definition for other important zero cycles appearing naturally in the geometry of the map $f$. Let us describe them in simple terms: the curves $\tilde{f}(\tilde{\gS}_b)$ are nodal, so each curve $\tilde{\gS}_b$ contains two points over each node of $\tilde{f}(\tilde{\gS}_b)$. These points fill in a curve $C$ which roughly speaking is the double point locus of $\tilde{f}$. There is an involution $i$ on this curve exchanging the two points over the same node. Furthermore, there is a morphism of vector bundles (to be defined more carefully latter on) 
$${T_{\gS/B}}_{|C} \oplus i^*{T_{\gS/B}}_{|C} \to (f^*TS)_{|C}.$$
Obviously, the determinant of this morphism  vanishes at cuspidal points. The vanishing locus of this determinant will be analyzed carefully in the last section.

\section{Double point locus}\label{step2}






In this section, we will first define rigorously the \emph{double point locus} $C$ of $\tilde{f}$ and its involution $\gs$. We would like to consider $Z_\cusp$ as the invariant subscheme of $C$ under the involution map $\gs$ (see Lemma~\ref{invcusp}). However it does not work if this double point locus $C$ is defined in $U$, since the involution $\gs$ is not well-defined everywhere, for instance on the pre-images of multiple points of an elliptic curve in $|L|$.

To this end, we first introduce the surface $U_1$, which is defined as the residual scheme to $\Delta_U$ in $U \times_S U  \subset U \times U$, where $\Delta_U$ is the diagonal of $U \times U$ (we refer to~\cite[Chapter $9$]{Fulton} for the definition and cycle-theoretic properties of residual schemes). Then we will define the double point locus $C$ as being contained in $U_1$ (which is different from the usual definition as in~\cite[Chapter $9$]{Fulton}).

First of all, as a consequence of hypothesis $(E)$, we note that


\begin{lem}\label{smoothdlbeloc}
$U_1$ is a surface which is smooth in an analytic neighborhood of $R \colonec U_1 \cap \Delta_U$.
\end{lem}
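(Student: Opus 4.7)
The plan is to verify smoothness of $U_1$ pointwise at each $q \in R$ by an explicit local computation of the residual scheme. By Lemmas~\ref{ramfib} and~\ref{simpram}, when one identifies $\Delta_U$ with $U$, the set $R$ coincides with the ramification divisor of $f|_U$, which is the disjoint union of finitely many cuspidal elliptic fibers of $\pi$. Each such fiber $E$ contains exactly one distinguished point $p \in E$ lying over the cusp of $f(E)$, and $f|_E$ is an immersion away from $p$. The argument splits into two cases according to the type of singularity of $f \colon U \to S$ at $q$.

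At a non-cusp point $q \in R$, $f$ is a Whitney fold: $df_q$ has rank one, $\det df$ vanishes simply along $E$ by Lemma~\ref{simpram}, and $\ker df_q$ is transverse to $T_q E$ because $f|_E$ is immersive at $q$. In suitable analytic coordinates $(x,y)$ on $U$ near $q$ and $(X,Y)$ on $S$ near $f(q)$, $f$ takes the normal form $(x,y) \mapsto (x^2, y)$. One then has $U \times_S U = V\bigl(y_1 - y_2,\, (x_1-x_2)(x_1+x_2)\bigr) = \Delta_U \cup V(x_1 + x_2, y_1 - y_2)$, and the residual component $V(x_1 + x_2, y_1 - y_2)$ is manifestly a smooth surface, which coincides with $U_1$ near $q$.

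At the cusp point $q = p$, I would use local coordinates~(\ref{coordcusp}), in which $f(x,y) = (x + y^2, y^3 + \tfrac32 xy)$. On $U \times U$, set $\xi \colonec x_1 - x_2$ and $\eta \colonec y_1 - y_2$, so the ideal of $\Delta_U$ is $(\xi, \eta)$. The defining ideal of $U \times_S U$ is generated by
$$A \colonec \xi + \eta(y_1 + y_2), \qquad B \colonec \eta(y_1^2 + y_1 y_2 + y_2^2) + \tfrac{3}{2}(x_1 y_1 - x_2 y_2).$$
The key algebraic identity is
$$B = \tfrac{1}{4}\,\eta\bigl(\eta^2 + 3(x_1 + x_2)\bigr) + \tfrac{3}{4}\,A\,(y_1 + y_2),$$
which exhibits $B$ modulo $A$ as divisible by exactly $\eta$. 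Writing $G \colonec \eta^2 + 3(x_1 + x_2)$, the residual scheme to $\Delta_U = V(A, \eta)$ inside $U \times_S U = V(A, \eta G)$ is therefore $U_1 = V(A, G)$. At any point $((0, y_0), (0, y_0)) \in R$ in this chart, the linear parts of $A$ and $G$ are $(x_1 - x_2) + 2 y_0 (y_1 - y_2)$ and $3(x_1 + x_2)$ respectively, which are linearly independent for every $y_0$. Hence $U_1$ is a smooth surface in an analytic neighborhood of $p$.

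The main obstacle is the factorization in the cusp case: that $B$ modulo $A$ is divisible by exactly $\eta$ (and not by $\eta^2$, which would force $U_1$ to be non-reduced or singular along $R$) is what makes the lemma hold. This factorization relies on the precise shape of the $A_2$ local form~(\ref{coordcusp}); the coprime orders of vanishing $2$ and $3$ of the two components of $f$ along the fiber are exactly what produce a clean factor of $\eta$. This is precisely the input provided by condition~$(E)$, which rules out deeper singularities (tacnodes, triple points, etc.) where the corresponding residual would degenerate.
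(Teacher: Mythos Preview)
Your proof is correct and follows essentially the same strategy as the paper's: split into the fold case and the cusp case, produce the same normal forms, and read off smoothness of the residual scheme from the explicit equations. The only differences are presentational. In the fold case you appeal to the standard characterization of a Whitney fold (corank one, reduced critical locus, kernel transverse to it) to obtain $(x,y)\mapsto(x^2,y)$, whereas the paper derives this form by hand; both arrive at $U_1 = V(x_1+x_2,\,y_1-y_2)$ locally. In the cusp case you keep all four variables and use the identity $B = \tfrac14\eta\,G + \tfrac34 A(y_1+y_2)$ to factor, while the paper first eliminates $x'$ via the first equation and then factors $(y-y')$ out of the second; your $V(A,G)$ agrees on $V(A)$ with the paper's hypersurface $y^2+yy'+y'^2+\tfrac32(x-y'(y+y'))=0$ in $\bC^3$, as one checks directly. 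Your extra verification of smoothness at all points $((0,y_0),(0,y_0))$ in the cusp chart is redundant with the fold case for $y_0\neq 0$, but harmless.
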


\begin{proof}

By definition, $R$ is the ramification locus of $\Delta_U \simeq U \to S$, which by Lemma~\ref{ramfib} is reduced and consists of fibers of $\pi$ whose image under $f$ contains a cusp. Let $E$ be one of these fibers and $z$ be a point in $E$. Suppose first that $f_{|E}$ is unramified at $z$. Since the ramification divisor of $f_{|U}$ is reduced, there exist local coordinates $(x,y)$ of $z \in U$ such that the first projection coincides with $\pi$ and that $f$ is given by $(x,y) \mapsto \(x^2 , y\)$. This is because we can first suppose that locally the ramification locus of $f$ is the line $V(x=0) \subset S$. So if $f$ is locally given by $(x,y) \to \(x\cdot\phi(x,y),\psi(x,y)\)$,  since $\jac{f} = x$ it is easy to see that  $\phi = x\cdot h(x,y)$ with $h(0,0) \ne 0$ and that $d\psi$ is invertible at $(0,0)$. Passing to the local chart of $S$ given by the inverse of $(x,y) \mapsto (x \sqrt{h} , \psi)$ allow to conclude.

Taking another copy of $(x,y)$ gives local coordinates $(x,y,x',y')$ around $(z,z) \in U \times U$, then locally at $(z,z)$, the equations defining $U \times_S U \subset U \times U$  is $(x+x')(x-x')=y -y' =0$. Since the diagonal is given by $x-x' = y-y' = 0$, one deduces that $U_1$ is given by $x+x'=y -y' =0$. Hence $U_1$ is smooth at $(z,z)$.

If $f_{|E}$ ramifies at $z$, using the local coordinates~(\ref{coordcusp}), the equations defining $U \times_S U$ at $(z,z) \in U\times U$ are given by
\begin{equation}\label{pdtlocf}
\begin{cases}
    & x + y^2 = x' +y'^2 \\
    &   y^3 + \frac{3}{2}xy = y'^3 + \frac{3}{2}x'y'
\end{cases}
\end{equation}
The first equation defines a smooth hypersurface in $U \times U$. After eliminating $x'$ by the first equation, the second equation becomes
$$ (y - y')\(y^2 + yy' + y'^2 +\frac{3}{2}\(x-y'(y +y')\) \)= 0.$$
So the residual scheme $U_1$ is locally defined in $\bC^3$ by
$$y^2 + yy' + y'^2 +\frac{3}{2}\(x-y'(y +y')\) = 0.$$
Hence $U_1$ is smooth at $(z,z)$.
\end{proof}

Let  $p_1$ and $p_2$ denote the two projections of $U_1 \subset U \times U$ to each component. For $i = 1,2$, let $\pi_i \colonec \pi \circ p_i$. 
We define  the \emph{double point locus of $\tilde{f}$}, denoted by $C$,  to be the residual scheme  to $R$ in $(\pi_1,\pi_2)^{-1}(\Delta_{B}) = U \times_S U \cap U \times_B U$. The involution  exchanging the two components of $U \times U$ induces an involution $\gs$ on $C$. The invariant subscheme of $C$ under $\gs$ is denoted by $C^\gs$.

\begin{lem}\label{lemFinite}
For $i = 1,2$,
\begin{enumerate}[(i)]
\item $p_i : U_1 \to U$ is a finite morphism;
\item The restriction of $\pi_i : U_1 \to B$ to $C$ is finite.
\end{enumerate} 
\end{lem}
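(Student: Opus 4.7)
The plan is to deduce both claims from the properness of $\tilde{\pi}:\wt{\gS}\to\ol{B}$ and $\tilde{f}:\wt{\gS}\to S$ together with a fiber-dimension analysis. A preliminary observation used in both parts: since $U=\tilde{\pi}^{-1}(B)$, the map $\pi:U\to B$ is proper, being the base change of $\tilde{\pi}$ along the open immersion $B\hookrightarrow\ol{B}$.

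For part (ii), I observe that $C\subset U\times_B U$ by construction, since any $(u,v)\in C$ satisfies $\pi(u)=\pi(v)$. The first projection $U\times_B U\to U$ is the base change of $\pi$ by itself, hence proper, so $p_i|_C:C\to U$ is proper, and therefore so is $\pi_i|_C=\pi\circ p_i|_C:C\to B$. For finite fibers, fix $b\in B$: by Lemma~\ref{ramfib} together with condition $(E)$, the restriction $\tilde{f}|_{\wt{\gS}_b}$ is the normalization of the irreducible nodal elliptic curve $\tilde{f}(\wt{\gS}_b)\in|L|$, and the fiber of $\pi_i|_C$ above $b$ is in bijection with the (finitely many) ordered pairs of preimages of the nodes of $\tilde{f}(\wt{\gS}_b)$.

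For part (i), I proceed in two steps. First, I show that $p_i$ is quasi-finite: the fiber above $u\in U$ is the residual scheme of $\{u\}$ in $\tilde{f}^{-1}(\tilde{f}(u))\cap U$, and by the classification of fibers of $\tilde{\pi}$ at the beginning of Section~\ref{step1}, the only divisors contracted by $\tilde{f}$ are the elliptic components of type (iii) fibers, all contained in $\wt{\gS}\setminus U$; consequently $\tilde{f}^{-1}(\tilde{f}(u))\cap U$ is $0$-dimensional for every $u\in U$. Second, I establish properness by embedding $U_1$ as an open subscheme of its closure $\ol{U_1}\subset\wt{\gS}\times_S\wt{\gS}$, whose projection to $\wt{\gS}$ is proper as a restriction of the proper morphism $\tilde{p}_i:\wt{\gS}\times_S\wt{\gS}\to\wt{\gS}$. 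The hard part, and the main obstacle, will be checking that $\ol{U_1}\cap\tilde{p}_i^{-1}(U)=U_1$, that is, excluding limit pairs in $\ol{U_1}$ with one coordinate in $U$ and the other in $\wt{\gS}\setminus U$; these can only occur above the finitely many points of $S$ lying simultaneously on the image of a smooth elliptic fiber and on the image of a type (ii) or (iii) fiber, and the exclusion must come from a local analysis of $\tilde{f}$ near the boundary of $U$.
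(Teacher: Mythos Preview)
Your treatment of part~(ii) is correct and in fact cleaner than the paper's. The paper derives~(ii) from~(i), arguing that a positive-dimensional fiber of $\pi_i|_C$ would force $f|_{\wt{\gS}_b}$ to have degree $>1$ onto its image; you instead note directly that $C\subset U\times_B U$ and that $\pi:U\to B$ is proper (as a base change of the projective $\tilde{\pi}$), so $\pi_i|_C$ is automatically proper, and fiber-finiteness follows because $f|_{\wt{\gS}_b}$ is birational onto its image. One small correction: under condition~$(E)$ the image curve $\tilde f(\wt{\gS}_b)$ need not be nodal---it can have ordinary multiple points and one cusp---but this does not affect your argument, since birationality (from $\Pic(S)=\bZ\cdot L$) is all you use.

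For part~(i) there is a genuine gap, and it cannot be repaired. Your quasi-finiteness argument is fine and matches what the paper actually proves: the paper's sentence ``suppose $p_i$ is not finite, then there is a curve $D$ in $U$ with $f(D)$ a point'' is really a fiber-dimension argument, and properness is never addressed. But the properness step you isolate will fail. Take the closure $\ol{U_1}\subset\wt{\gS}\times_S\wt{\gS}$ and project to $\ol{B}\times\ol{B}$ via $(\tilde\pi,\tilde\pi)$: this map is dominant (for generic $b\neq b'$ in $B$ the fiber consists of the pairs lying over $f(\wt{\gS}_b)\cap f(\wt{\gS}_{b'})\neq\emptyset$) and proper, hence surjective. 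So as soon as $\ol{B}\setminus B\neq\emptyset$ there exist points $(u_0,u'_0)\in\ol{U_1}$ with $\tilde\pi(u_0)\in B$ and $\tilde\pi(u'_0)\in\ol{B}\setminus B$; such a point lies in $\tilde p_1^{-1}(U)$ but not in $U_1$, so $\ol{U_1}\cap\tilde p_1^{-1}(U)\supsetneq U_1$ and $p_1$ is not proper. In other words, the ``local analysis'' you hope for would show the opposite of what you need. Fortunately this does not damage the paper: your independent proof of~(ii) removes the need for~(i) there, and the remaining application (the injectivity of $p_1^*\gO_U\to\gO_{U_1}$ in the proof of~\eqref{cotfact}) only requires that $p_1$ contract no curve, which is exactly quasi-finiteness.
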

\begin{proof}
For $(i)$, suppose that $p_i$ is not finite, then there is a curve $D$ in $U$ such that $f(D)$ is a point. Since $f$ does not contract fibers of $\pi$, $D$ has to dominate $B$, which contradicts Lemma~\ref{ramfib}.

Next, assume that $C \cap \pi_i^{-1}(b)$ has positive dimension for some $b \in B$. Since $p_i$ is finite by $(i)$, the image of $C \cap \pi_i^{-1}(b)$ under $p_i$ is the curve $E \colonec \wt{\gS}_b$. By definition of $C$, this implies that $f_{|E}$ is of degree $>1$ onto its image and contradicts the hypothesis $\Pic(S) = \bZ \cdot L$.
\end{proof}

\begin{lem}\label{smoothalC}
$U_1$ is smooth along $C$. In particular, $C$ is a Cartier divisor in $U_1$. 
\end{lem}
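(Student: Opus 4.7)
My plan is to check smoothness of $U_1$ at each point of $C$ by splitting according to whether that point lies on the diagonal $\Delta_U$ or not. Any point of $C$ lying on $\Delta_U$ is automatically in $R = U_1 \cap \Delta_U$, and smoothness there is already furnished by Lemma~\ref{smoothdlbeloc}. The real work is at off-diagonal points $(z_1, z_2) \in C$ with $z_1 \ne z_2$: the residual construction modifies $U \times_S U$ only along $\Delta_U$, so in a neighborhood of such a point $U_1$ coincides with $U \times_S U$, and it suffices to show the latter is smooth of dimension two.

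By definition of $C$, the points $z_1$ and $z_2$ lie in the same fiber $\gS_b$ of $\pi$ and share a common image $p \colonec f(z_1) = f(z_2)$, which is a multiple point of $\tilde{f}(\wt{\gS}_b)$. Condition $(E)$ forces $p$ to be an ordinary multiple point, because the cusp (when one is present on the image curve) has a unique preimage on $\gS_b$ and therefore cannot equal $p$. I would then split according to whether $\gS_b$ lies in the ramification locus of $f_{|U}$. If it does not, Lemma~\ref{simpram} gives that $f$ is \'etale at both $z_1$ and $z_2$; pulling back analytic coordinates on $S$ centered at $p$ yields compatible local coordinates on $U$ at each $z_i$ in which the two defining equations of $U \times_S U$ have linearly independent differentials. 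If on the contrary $\gS_b$ is in the ramification locus, so that $\tilde{f}(\wt{\gS}_b)$ is cuspidal, then both $z_1$ and $z_2$ are ramification points of $f$ but immersive points of $f_{|\gS_b}$, and the local model $(x_i, y_i) \mapsto (x_i^2, y_i)$ from the proof of Lemma~\ref{smoothdlbeloc} applies at each $z_i$; exploiting the transversality of the two branches of $\tilde{f}(\wt{\gS}_b)$ at the ordinary multiple point $p$, one can choose coordinates on $S$ so that $U \times_S U$ is cut out near $(z_1, z_2)$ by two equations whose linear parts involve different pairs among the coordinates $(x_1, y_1, x_2, y_2)$, giving the desired smoothness.

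The Cartier-divisor conclusion then drops out formally: the intersection $U_1 \cap (U \times_B U)$ is cut out in $U_1$ by the single equation $\pi_1 - \pi_2 = 0$, hence is a Cartier divisor in the now smooth surface $U_1$ near $C$, and as the effective residual of the Cartier divisor $R$ inside this Cartier divisor, $C$ is itself an effective Cartier divisor. I expect the main obstacle to be the cuspidal sub-case: neither $z_1$ nor $z_2$ is a point where $f$ is a local isomorphism, so one cannot transport smoothness from $S$ via \'etaleness, and the heart of the verification is to check that the two defining equations for $U \times_S U$ remain transverse at $(z_1, z_2)$ despite the differential of $f$ being degenerate at each $z_i$. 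This is precisely where the explicit cusp normal form $(x,y) \mapsto (x^2, y)$ is genuinely needed.
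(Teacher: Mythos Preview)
Your proposal is correct and follows essentially the same route as the paper: the same diagonal/off-diagonal split, the same further subdivision by whether the fiber lies in the ramification locus, and the same appeal to transversality of the two branches at the ordinary multiple point in the cuspidal-fiber case. The only cosmetic difference is that the paper puts just one of the two points into the normal form $(x,y)\mapsto(x^2,y)$ (with a single chart on $S$ fixed one cannot do both simultaneously) and then reads off $\partial_{y'}u(0,0)\ne 0$ directly from transversality---precisely the verification you anticipate as the ``heart'' of the argument.
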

 
\begin{rem}
That $C$ is a Cartier divisor in $U_1$ is in fact a direct consequence of Lemma~\ref{smoothdlbeloc}. Indeed, this follows from the fact that $(\pi_1,\pi_2)^{-1}(\Delta_{B})$ is a Cartier divisor in $U_1$, and that $U_1$ is smooth along $R$.
\end{rem}

\begin{proof}[Proof of Lemma~\ref{smoothalC}]
Let $(z,z') \in C \subset U_1$. If $z=z'$, then $U_1$ is smooth along $(z,z')$ by Lemma~\ref{smoothdlbeloc}. Suppose that $z \ne z'$; since $(S,L)$ satisfies hypothesis $(E)$,  either $z,z' \in U$ do not lie in the ramification divisor of $f$, or $z,z' \in U$ belong to the ramification divisor of $f$ and the local images of $\gS_{\pi(z)}$ at $z$ and at $z'$  under $f$ (denoted $f(\gamma_z)$ and $f(\gamma_{z'})$) intersect transversally at $f(z)$.
In the first case, since any non-zero element in $H^0(\gS_{\pi(z)},N_f)$ vanishes nowhere on  $\gS_{\pi(z)}$ and $f_{|\gS_{\pi(z)}}$ is an immersion, $f$ defines a local isomorphism at between $(U,z)$ and $(S,f(z))$.
Thus one can choose local coordinates centered at $(z,z') \in U \times U$ and $(f(z),f(z')) \in S \times S$ such that locally, $(f,f) : U \times U \to S \times S$ is given by 
\begin{equation}\label{nonramloc}
 \((x,y),(x',y')\) \mapsto \(x,y,u(x',y'),v(x',y')\),
 \end{equation}
 with $\jac{u,v}_{(0,0)} \ne 0$ and such that the first projections $(x,y) \mapsto x$ and $(x',y') \mapsto x'$ give  local expressions of $\pi : U \to B$. Therefore $U\times_S U$, being locally defined by equations
 $$
 \begin{cases}
    & x  = u(x' ,y') \\
    &   y  = v(x',y'),
\end{cases}
$$
with independent differentials, is smooth at $(z,z')$.

In the second case, we showed in the proof of Lemma~\ref{smoothdlbeloc} that there exist local coordinates $(x,y)$ of $z \in U$ such that the first projection coincides with $\pi$ and that $f$ is given by $(x,y) \mapsto \(x^2 , y\)$. Therefore we can assume that there exist local coordinates $\((x,y),(x',y')\) \in U \times U$ such that $(f,f) : U \times U \to S \times S$ is locally given by
\begin{equation}\label{ramloc}
\((x,y),(x',y')\) \mapsto \(x^2,y,u(x',y'),v(x',y')\),
\end{equation}
and that the first projections $(x,y) \mapsto x$ and $(x',y') \mapsto x'$ give  local expressions of $\pi : U \to B$.
Since $z' \in \gS_{\pi(z)}$ is not an immersive point of $f_{|\gS_{\pi(z)}}$, one has $\(\dr_{y'}u(x',y'),\dr_{y'}v(x',y')\) \ne (0,0)$. Furthermore, since the local images $f(\gamma_z)$ and $f(\gamma_{z'})$ intersect transversally at $f(z)$, we have $\dr_{y'}u(0,0) \ne 0$. Hence we deduce that the local equations
 $$
 \begin{cases}
    & x^2  = u(x' ,y') \\
    &   y  = v(x',y'),
\end{cases}
$$
defining $U \times_S U$ have independent differentials at $(z,z')$, \emph{i.e.} $U \times_S U$ is smooth at $(z,z')$.

\end{proof}

Let $j: C \hookrightarrow U_1$ and $j_i \colonec {p_i} \circ j$ for $i=1,2$. Let $\phi \colonec f \circ j_1 = f \circ j_2$. We summarize the situation by the following commutative diagram: 
\begin{center}
\begin{tikzpicture}
\centering
\matrix (m) [matrix of math nodes, row sep=1.5em,
column sep=1.5em]
{ &  & U &  \\
U_1 & C & & S \\
 & &  U & \\};
\path[->,font=\scriptsize]
(m-2-2) edge node[auto] {$j_1$} (m-1-3)
(m-2-2) edge node[left] {$j_2$} (m-3-3)
(m-1-3) edge node[auto] {$f$} (m-2-4)
(m-3-3) edge node[right] {$f$} (m-2-4)
(m-2-2) edge node[auto] {$\phi$} (m-2-4)
(m-2-1) edge[bend left = 40] node[auto] {$p_1$} (m-1-3)
(m-2-1) edge[bend right = 40] node[below] {$p_2$} (m-3-3);
\path[left hook->,font=\scriptsize]
(m-2-2) edge node[] {$j$} (m-2-1);
\end{tikzpicture}
\end{center}
Note that since $C$ is invariant under $\gs$ and $j_1 = j_2 \circ \gs$, we have $j_1(C) = j_2(C)$.

\begin{lem}\label{invcusp}
For $i = 1,2$, $j_i(C^\gs) = Z_\cusp$.
\end{lem}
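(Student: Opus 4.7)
The plan is to reduce the identification $j_i(C^\gs) = Z_\cusp$ to the two explicit local descriptions already carried out in the proof of Lemma~\ref{smoothdlbeloc}. Set-theoretically, a fixed point of $\gs$ on $C$ is a pair $(z,z) \in C \subset U_1$, so it lies in $R = U_1 \cap \Delta_U$, which by Lemma~\ref{simpram} is supported on those fibers $\gS_b$ whose image $f(\gS_b)$ is a cuspidal elliptic curve. Thus only such fibers can contribute to $C^\gs$. The first step is then to rule out the non-cuspidal points of these fibers, and the second step is to check that at each cusp preimage, $C$ does pick up a single reduced fixed point of $\gs$ mapping to the cusp preimage in $U$.

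At a point $z \in \gS_b$ at which $f|_{\gS_b}$ is immersive (but $f : U \to S$ is still ramified, since the whole fiber lies in the ramification divisor), the local normal form $f : (x,y) \mapsto (x^2, y)$ from the proof of Lemma~\ref{smoothdlbeloc} gives $U_1 = \{x+x' = 0,\ y = y'\}$ near $(z,z)$. Hence $(\pi_1,\pi_2)^{-1}(\Delta_B) \cap U_1$, being cut out in $U_1$ by $x = x'$, coincides scheme-theoretically with $R = \{x=x'=0,\ y=y'\}$, and the residual scheme of $R$ in itself is empty. So $C$ does not meet $(z,z)$.

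At a cusp preimage $z \in \gS_b$, one uses local coordinates~(\ref{coordcusp}), so that $f(x,y) = (x + y^2,\, y^3 + \tfrac{3}{2}xy)$. Eliminating $x'$ via $x + y^2 = x' + y'^2$, the surface $U_1$ is cut out in the smooth chart $\bC^3_{(x,y,y')}$ by the equation $Q := y^2 + yy' + y'^2 + \tfrac{3}{2}(x - y'(y+y')) = 0$, as recorded in the proof of Lemma~\ref{smoothdlbeloc}. The subscheme $(\pi_1,\pi_2)^{-1}(\Delta_B) \cap U_1$, cut out inside $U_1$ by $x = x'$, i.e.\ by $y^2 = y'^2$, splits along the two branches $y = y'$ and $y = -y'$. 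The first recovers $R$ (on which $Q = 0$ forces $x = 0$), and the second is the residual branch $C$, a smooth curve parametrized by $(x,y,y',x') = (-\tfrac{2}{3}y^2,\, y,\, -y,\, -\tfrac{2}{3}y^2)$. The involution $\gs$ acts in this parametrization by $y \mapsto -y$, so its fixed scheme on $C$ is the reduced point $\{y = 0\}$, whose image under $j_i$ is the cusp preimage $(0,0) \in U$.

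Combining the two local computations, $C^\gs$ is a $0$-dimensional reduced subscheme of $C$, and its image under $j_i = p_i \circ j$ is precisely the set of cusp preimages with reduced structure, which by definition is $Z_\cusp$. The only slightly delicate point is the scheme-theoretic residual calculation in the cuspidal chart; everything else follows from unwinding the local equations already set up in Section~\ref{step2}.
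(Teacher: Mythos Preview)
Your proof is correct and follows essentially the same approach as the paper's: both arguments reduce to the two local models from Lemma~\ref{smoothdlbeloc}, rule out the immersive points of a cuspidal fiber by showing $C$ is empty there, and then compute at the cusp preimage that $C$ is the branch $y'=-y$ with $\gs$-fixed scheme the reduced origin. Your presentation works in the eliminated chart $(x,y,y')$ for $U_1$ rather than in $(x,y,x',y')$, but the content is identical; in fact your parametrization $x=-\tfrac{2}{3}y^2$ is the correct one (the paper's displayed equation~(\ref{loc2}) has a harmless sign typo).
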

This equality is \emph{a priori} set-theoretic; however both sets have natural scheme-theoretic structures which are reduced (the reducedness of $C^\gs$ will be shown in the proof below), so \emph{a posteriori} this is also a scheme-theoretic equality.

\begin{proof}[Proof of Lemma~\ref{invcusp}]
Let $(z,z) \in C^\gs$; we will use the same system of local coordinates $(x,y,x',y')$ of $U \times U$ at $(z,z)$ as in the proof of Lemma~\ref{smoothdlbeloc}. Since $C^\gs \subset R$ as a curve in $U$ is the ramification locus of $f: \Delta_U \simeq U \to S$, $z$ is supported on some fiber $E$ of $\pi$ such that $f(E)$ contains a cusp. If $f_{|E}$ is immersive at $z \in E$, then  $U \times_S U \cap U \times_B U$ is given by $x-x' = y-y' = 0$, which are the same equations defining the diagonal in $\Delta_U$. So $z$ is not an immersion point of $f_{|E}$  since $C$ is residual to $\Delta_U$.  Therefore, $j_i(z,z)$ is supported on $Z_\cusp$. Working in local coordinates~(\ref{coordcusp}), the system of equations that defines $(U \times_S U) \cap (U \times_B U)$ at $(z,z)$ is 
\begin{equation}\label{loc1}
\begin{cases}
    & x =x' \\
    & x + y^2 = x' +y'^2 \\
    &   y^3 + \frac{3}{2}xy = y'^3 + \frac{3}{2}x'y'
\end{cases}
\end{equation}
So the equation defining $C$, being the residual scheme to $R$ in $(U \times_S U) \cap (U \times_B U)$, is
\begin{equation}\label{loc2}
\begin{cases}
    & x =x' \\
    & y = -y' \\
    & \frac{3}{2}x-y^2 = 0
\end{cases}
\end{equation}
and in the coordinates $(x,y)$ the involution $\gs$ acts on $C$ by $(x,y) \mapsto (x,-y)$. So locally $C^\gs$ is the reduced point $(0,0)$, hence $j_i(C^\gs) = Z_\cusp$.

\end{proof}

\begin{lem}\label{cuspram}
The scheme-theoretic intersection $C \cap R$ is $C^\gs$. 
\end{lem}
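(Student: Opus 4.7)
The plan is to first verify that $C \cap R$ and $C^\gs$ share the same underlying set, and then to compare scheme structures via the local computations in the coordinates~(\ref{coordcusp}) already produced in the proofs of Lemmas~\ref{smoothdlbeloc} and \ref{invcusp}.

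A closed point of $C \cap R$ lies in $R = U_1 \cap \Delta_U$, hence is of the form $(z,z)$ and is automatically fixed by the involution $\gs$, giving $C \cap R \subseteq C^\gs$ set-theoretically. Conversely a $\gs$-fixed closed point of $C$ must satisfy $z = z'$, hence lies in $C \cap \Delta_U = C \cap R$. In particular, both sets are mapped by $j_1$ (equivalently by $j_2$) bijectively onto $Z_\cusp$ by Lemma~\ref{invcusp}.

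It remains to compare the two subscheme structures at each common closed point $(z,z) \in C^\gs$. Choose coordinates~(\ref{coordcusp}) on $U$ at $z$, and pass to the chart of $U_1$ with coordinates $(x,y,y')$ obtained in the proof of Lemma~\ref{smoothdlbeloc} by eliminating $x'$ through $x' = x + y^2 - y'^2$; in this chart $U_1$ is the smooth hypersurface
\[
y^2 + yy' + y'^2 + \tfrac{3}{2}\bigl(x - y'(y + y')\bigr) = 0.
\]
The divisor $U_1 \cap (U \times_B U) \subset U_1$ is cut out by $x - x' = 0$, which in this chart reads $y^2 - y'^2 = (y-y')(y+y') = 0$; the residual decomposition of Section~\ref{step2} then identifies $R = V(y - y') \cap U_1$ and $C = V(y + y') \cap U_1$. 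Hence $C \cap R$ is cut out in $U_1$ by the ideal $(y-y',\, y+y') = (y, y')$, which combined with the defining equation of $U_1$ forces $x = 0$, so $C \cap R$ is the reduced point $(0,0,0)$. On the other hand $C$ is smooth with local parameter $y$ (the defining equation of $C$ in $U_1$ yields $x = -\tfrac{2}{3}y^2$), and by the computation performed at the end of the proof of Lemma~\ref{invcusp} the involution $\gs$ acts on $C$ as $y \mapsto -y$; therefore $C^\gs$ is cut out on $C$ by the ideal $(2y) = (y)$, which is also the reduced origin.

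I do not anticipate a serious obstacle: the lemma is essentially a cleanup of the local models built in Lemmas~\ref{smoothdlbeloc}, \ref{smoothalC} and \ref{invcusp}, and once one checks that the ideals of $R$ and $C$ inside $U_1$ are respectively generated by $y - y'$ and $y + y'$ near a cuspidal point---which is transparent from the residual-scheme definitions and the factorization $y^2 - y'^2 = (y-y')(y+y')$---the reducedness of $C \cap R$ and its agreement with the manifestly reduced $C^\gs$ follow at once in characteristic zero.
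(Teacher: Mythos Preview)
Your proof is correct and follows essentially the same route as the paper: both arguments reduce to the local equations~(\ref{loc1}) and~(\ref{loc2}) obtained in the proof of Lemma~\ref{invcusp}, and the paper in fact dispatches the lemma in a single sentence by pointing to those equations. Your write-up simply spells out the set-theoretic equality and the scheme-theoretic comparison that the paper leaves implicit; the sign in your expression $x=-\tfrac{2}{3}y^2$ differs from the paper's $\tfrac{3}{2}x-y^2=0$, but this is immaterial to the argument (and your sign is the one consistent with the derivation in Lemma~\ref{smoothdlbeloc}).
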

\begin{proof}
It is clear from the system of local equations~\ref{loc1} and~\ref{loc2} describing $(U \times_S U) \cap (U \times_B U)$ and $C$ in the proof of Lemma~\ref{invcusp}.
\end{proof}


Next we define
\begin{equation}\label{psi}
\psi : j_1^* T_{U/B} \oplus j_2^* T_{U/B} \to \phi_{|C}^*TS
\end{equation}
to be the sum of $j_i^* T_{U/B} \to \phi_{|C}^*TS$ induced by ${T_{U/B}} \to f^*_{|U} TS$. Let $W$ be the Cartier divisor of $C$ defined by the vanishing of $\det{\psi}$. The geometry motivation for introducing $\psi$ and $W$ is the fact that away from the ramification of $\tilde{f}$, the vanishing locus of $\det{\psi}$ consists of pairs $(z,z')$ such that the local branches $f(\gamma_z)$ and $f(\gamma_{z'})$ do not intersect transversally. As we will see in the proof of Proposition~\ref{det}, the length of $W$ over $(z,z')$ is precisely one less the intersection multiplicity of $f(\gamma_z)$ and $f(\gamma_{z'})$. The zero cycle $W$ will play a key role in the conclusion of the proof in the next section.

\section{Proof of Theorem~\ref{main}}\label{proof}
 
Since we are dealing with algebraic cycles on varieties which are not necessarily smooth, it would be convenient to introduce the homomorphism  
$$\tau : K_0(X) \to \CH_*(X)_\bQ$$
constructed in~\cite[Theorem $18.3$]{Fulton} by MacPherson~\cite{MacPh} for any algebraic scheme $X$. (One should think to $\tau$ as extending the map $\cF \mapsto \ch(\cF) \cdot \Td(X)$ defined for smooth $X$.)

The properties of $\tau$ that we need is the following, which can be found in~\cite[Theorem $18.3$]{Fulton}:

\begin{itemize}
\item[$\bullet$] $\tau$ defines a homomorphism of $K^0(X)$-modules in the sense that for any $\ga \in K_0(X)$ and any $\gb \in K^0(X)$, one has
$$\tau (\gb \otimes \ga) = \chchar(\gb) \cdot \tau(\ga);$$
\item[$\bullet$] If $\supp(\cF)$ is a $0$-dimensional subscheme of $X$,  then
$$\tau(\cF) = \sum_{x \in \supp(\cF)} l_x(\cF)[x] \in \CH_0(X)_\bQ,$$
where $l_x(\cF)$ denotes the length of the stalk of $\cF$ over the local ring $\cO_{C,x}$.
\item[$\bullet$] If $f : X \to Y$ is a local complete intersection morphism, then 
$$\tau(f^*\ga) = \mathrm{td}(T_f) \cdot f^*\tau(\ga)$$
for any $\ga \in K_0(Y)$ where $T_f \in K^0(X)$ stands for the virtual tangent bundle.
\end{itemize}

Let $\gO_{C/B}$ be the relative cotangent with respect to ${\pi_1}_{|C}= {\pi_2}_{|C} : C \to B$. Since ${\pi_1}_{|C}$ is a finite morphism between curves by Lemma~\ref{lemFinite}, $\gO_{C/B}$ is supported on a $0$-dimensional subscheme. Therefore
$$\tau(\gO_{C/B}) = \sum_{x \in \supp(\gO_{C/B})} l_x(\gO_{C/B})[x] \in \CH_0(C)_\bQ.$$

Back to the morphism $\psi$ we defined in~(\ref{psi}), we can describe the zero locus of $\det \psi$ as follows:

\begin{pro}\label{det}
Let $W \subset C$ be the zero locus of $\det \psi$, then $[W] \in {\pi_1}_{|C}^*\Pic(B) \subset \CH_0(C)$. Moreover, the following identity holds in $\CH_0(C)_\bQ$
\begin{equation}\label{detcusptac}
\tau(\gO_{C/B}) + [C^\gs] = [W].
\end{equation}
\end{pro}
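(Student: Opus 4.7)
The plan is to prove the two assertions of the proposition in order. For the first, $[W]\in (\pi_1|_C)^*\Pic(B)$, I would identify the line bundle of which $\det\psi$ is a section. Since $S$ is a $K3$ surface we have $\det TS\cong \cO_S$, so $\det(\phi^*TS)\cong \cO_C$. Since $\pi_{|U}:U\to B$ is a smooth elliptic fibration there is some $L\in\Pic(B)$ with $c_1(\gO_{U/B})=\pi^* c_1(L)$, hence $c_1(j_i^* T_{U/B}) = -(\pi_1|_C)^* c_1(L)$ for $i=1,2$. Combining these facts, $\det\psi\in \Gamma(C,(\pi_1|_C)^* L^{\otimes 2})$, and therefore $[W] = 2(\pi_1|_C)^* c_1(L)\in (\pi_1|_C)^*\Pic(B)$.

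For the identity $\tau(\gO_{C/B})+[C^\gs]=[W]$ in $\CH_0(C)_\bQ$, both sides are $0$-cycles supported on finite subsets of $C$, so I would argue by a local, pointwise comparison of stalk lengths. At a generic point of $C$, or at a point $(z,z')\in C$ with $z\ne z'$ arising from an ordinary multiple point of some $f(\gS_b)$ (the generic off-cusp singularity type allowed by hypothesis $(E)$), the local branches $f(\gamma_z)$ and $f(\gamma_{z'})$ are transverse at $f(z)=f(z')$; consequently $\psi$ is locally an isomorphism, $\pi_1|_C$ is locally étale, and $C^\gs$ is locally empty, so each of the three terms vanishes near such points.

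The decisive case is a cuspidal point $c\in C^\gs$. Using the local form~(\ref{coordcusp}) for $f_\univ$ together with the explicit descriptions of $U_1$ and of $C\subset U_1$ established in the proofs of Lemmas~\ref{smoothdlbeloc} and~\ref{invcusp}, $C$ is parametrized near $c$ by a local coordinate $y$ with $j_1(y)=(-\tfrac{2}{3}y^2,y)$, $j_2(y)=(-\tfrac{2}{3}y^2,-y)$, the involution $\gs$ acting as $y\mapsto -y$, and $\pi_1|_C(y)=-\tfrac{2}{3}y^2$. I would write $\psi$ in these coordinates as an explicit $2\times 2$ matrix in the bases $\partial_y,\partial_{y'}$ of $j_i^*T_{U/B}$ and in the local frame of $\phi^*TS$ coming from the coordinates on $S$, then read off the order of vanishing of $\det\psi$ at $y=0$ (giving the local length of $W$), compute the length of the stalk of $\gO_{C/B}$ at $y=0$ from the ramification of $\pi_1|_C$, and observe that the reduced fixed subscheme of $\gs$ is $\{y=0\}$ (giving the local length of $[C^\gs]$); the identity then reduces to verifying that these three local lengths balance.

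The main obstacle is precisely this cuspidal computation. Since $T_{U/B}\to f^*TS$ already vanishes at $z\in U$, its pullback via $j_1$ and $j_2$ acquires a common vanishing factor which contributes to $\det\psi$ beyond the ``transverse'' degeneracy, and this shared contribution has to be carefully disentangled from the ramification term $\tau(\gO_{C/B})$ and the reduced fixed-point term $[C^\gs]$ in order to match $[W]$ on the nose. The geometric content behind this bookkeeping is the interpretation of the length of $W$ over $(z,z')$ as one less than the intersection multiplicity of $f(\gamma_z)$ and $f(\gamma_{z'})$, extended across the degeneration where the two points of the double point locus collide into a single cusp.
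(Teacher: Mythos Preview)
Your argument for the first assertion and your overall strategy for the second (pointwise comparison of local lengths) match the paper's. The gap is in your case analysis for points $(z,z')\in C$ with $z\ne z'$.

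You handle only the case where the two local branches $f(\gamma_z)$ and $f(\gamma_{z'})$ meet transversally, justifying this by hypothesis~$(E)$. But condition~$(E)\,ii)$ constrains only those elliptic curves in $|L|$ that carry a non-immersive point; it imposes nothing on \emph{immersed} elliptic curves, which may therefore have a tacnode or a higher-order tangency. At such a point the branches are tangent, so $\det\psi$ vanishes and $\pi_1|_C$ ramifies: both $[W]$ and $\tau(\gO_{C/B})$ acquire strictly positive local length there, and one must show these lengths coincide. The paper does this for every $(z,z')$ with $z\ne z'$ off the ramification divisor of $f$: choosing coordinates in which $f$ is a local isomorphism near $z$, one writes $C$ locally as the zero locus of $g(x',y')\colonec x'-u(x',y')$ and checks that both $(\gO_{C/B})_{(z,z')}$ and $\cO_{W,(z,z')}$ are isomorphic as $\cO_{C,(z,z')}$-algebras to $\bigl(\bC[x',y']/(g,\dr_{y'}g)\bigr)_{(0,0)}$, the first from the relative cotangent sequence, the second because $\det\psi=\pm\dr_{y'}u=\mp\dr_{y'}g$ in these coordinates. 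This algebraic identification, rather than its transverse special case, is the real content of the off-cusp step.

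A smaller point: for $(z,z')$ with $z\ne z'$ lying on a fiber contained in the ramification divisor of $f$ (an ordinary multiple point of a cuspidal curve, away from the cusp), your assertion that $\pi_1|_C$ is \'etale does not follow directly from transversality of the branches, since $f$ itself is not a local isomorphism near either $z$ or $z'$; the paper verifies this case by a separate coordinate computation.
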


\begin{proof}
Since $W$ is the scheme-theoretic vanishing locus of $\psi :  j_1^* T_{U/B} \oplus j_2^* T_{U/B} \to \phi_{|C}^*TS$ and $c_1(T_S) = 0$, its cycle class $[W]$ in $\CH_0(C)$ is given by
\begin{eqnarray*}
 & [W]  = c_1( j_1^* \gO_{U/B} \oplus j_2^*\gO_{U/B} ) \\ 
& = j_1^*c_1(  \gO_{U/B}) + j_2^* c_1( \gO_{U/B} ). \\ 
\end{eqnarray*}
The first statement of Proposition~\ref{det} follows from the fact that $c_1( \gO_{U/B}) \in \pi^*\Pic(B)$.

Since $C^\gs$ is reduced, to show that $\tau(\gO_{C/B}) + [C^\gs] = [W]$, it suffices to show that for all closed points $x \in C$, 
$$
\begin{cases}
     l_x(W) = 1+ l_x(\gO_{C/B}) \ \ \ &\text{if } \ x \in C^\gs ; \\
     l_x(W) = l_x(\gO_{C/B}) \ \ \ &\text{otherwise}.
\end{cases}
$$


Suppose that $z = z'$, \emph{i.e.} $(z,z') \in C^\gs$ (that is, we are in the cusp situation leading to vanishing of $\psi$). On one hand, using local coordinates~(\ref{coordcusp}) to give local coordinates $(x,y,x',y')$ of $U \times U$ at $(z,z)$, we have in the proof of Lemma~\ref{invcusp} the system of equations~(\ref{loc2})
\begin{equation}
\begin{cases}
    & x =x' \\
    & y = -y' \\
    & \frac{3}{2}x-y^2 = 0
\end{cases}
\end{equation} 
defines the curve $C$ at $(z,z')$. After eliminating the variables $x'$ and $y'$, locally the curve $C$ is given by  $\frac{3}{2}x-y^2 = 0$. Since $C \to B$ is locally the first projection of $(x,y)$, we deduce that $l_{z,z'}(\gO_{C/B}) =1$   

On the other hand, by simple computations $\det \psi$ is given by
$$(x,y,x',y') \mapsto yy'\(2y'-2y + x' - x\).$$
After eliminating $x',y'$ and dividing out $y-y'$ (since $C$ is residual to $\Delta_U$), $\det \psi$ becomes $(x,y) \mapsto -y^2$. Hence $l_{(z,z')}(W) =2$. 

Now assume that $z \ne z'$. Recall first that $(z,z') \in C$ if and only if there exists an elliptic curve $\wt{\gS}_b$ with $z,z' \in \wt{\gS}_b$ and $f(z)=f(z')$. On the other hand by Lemma~\ref{ramfib}, the ramification of $\tilde{f}$ is a union of curves $\wt{\gS}_b$, so either none of $z,z'$ belongs to the ramification divisor of $\tilde{f}$, or both. 

1) First we suppose that $z,z' \in U$ do not lie in the ramification divisor of $f:U\to S$. In this case, as we showed in the proof of Lemma~\ref{smoothalC}, the local expression of $(f , f) : U\times U \to S \times S$ is given by~(\ref{nonramloc}):
$$(x,y,x',y') \mapsto (x,y,u(x',y'),v(x',y')).$$ 
So the equations defining $C$ are 
$$
\begin{cases}
    & x=x' \\
    & x= u(x',y') \\
    & y= v(x',y'). 
\end{cases}
$$
After eliminating the variables $x$ and $y$ by the first and the third equations, the curve $C$ is defined by the vanishing of $g(x',y') \colonec x' - u(x',y')$ in the local coordinates $(x',y')$. On one hand, since $\pi : U \to B$ is locally given by the first projection $(x',y') \mapsto x'$, we obtain the following isomorphism of $\cO_{C,(z,z')}$-algebras:
 $$(\gO_{C/B})_{(z,z')} \simeq \(\bC[x',y'] / (g, \dr_{y'}g)\)_{(x',y')}.$$
On the other hand, in these local coordinates, up to sign $\det \psi$ can be written as
$$\det \psi(x',y') = \dr_{y'} u(x',y') = \dr_{y'} g(x',y').$$
Hence as $\cO_{C,(z,z')}$-algebras,
$$\cO_{W,(z,z')} \simeq \(\bC[x',y'] / (g, \dr_{y'}g)\)_{(x',y')}.$$
In particular, $ l_{(z,z')}(W) = l_{(z,z')}(\gO_{C/B})$.

2) Still assuming $z \ne z'$, it remains to treat the case where $z,z' \in U$  lie in the ramification divisor of $f:U\to S$. It is easy to see that under these assumptions, $(z,z') \notin W$. Indeed, if $(z,z') \in C$ is a point where $\det \psi$ vanishes, this means that the two spaces $(f_{|E})_*(T_{E,z})$ and $(f_{|E})_*(T_{E,z'})$ generate at most a one-dimensional  subspace of $T_{S,f(z)}$ where $E \subset U$ is the elliptic curve passing through $z,z' \in U$. But since $S$ satisfies condition $(E)$, either $\phi(z,z)$ is the cusp of the elliptic curve $f(\gS_{\pi(z)})$ hence $z = z'$, or $z,z' \in U$ do not lie in the ramification divisor of $f:U\to S$, which all violate our assumptions. Hence we need to show that $\gO_{C/B}$ is not supported on $(z,z')$ to finish the proof.

Using local expressions~(\ref{ramloc}) given in the proof of Lemma~\ref{smoothalC}, the equations defining $C$ are 
$$
\begin{cases}
    & x=x' \\
    & x^2= u(x',y') \\
    & y= v(x',y'). 
\end{cases}
$$
So the curve $C$ is defined by the vanishing of $ x'^2 - u(x',y')$ in the local coordinates $(x',y')$. Since $\pi : U \to B$ is locally given by the first projection $(x',y') \mapsto x'$, we deduce that 
 $$(\gO_{C/B})_{(z,z')} \simeq \(\bC[x',y'] / (x'^2 - u, \dr_{y'}u)\)_{(x',y')}$$
as $\cO_{C,(z,z')}$-algebras. Recall in the proof of Lemma~\ref{smoothalC} that since $f_{|E}(\gamma_z)$ and $f_{|E}(\gamma_{z'})$ intersect transversally, one has $\dr_{y'}u(0,0) \ne 0$. Therefore, $\gO_{C/B}$ is not supported on $(z,z')$.



\end{proof}

\begin{lem}
The following identity holds in $\CH_0(C)_\bQ$:
\begin{equation}\label{cotbase}
\tau(\gO_{C/B}) = c_1(j^*\gO_{U_1}) + j^*[C] - j_1^*c_1({\pi^*}\gO_B).
\end{equation}
\end{lem}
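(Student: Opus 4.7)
The plan is to express $[\gO_{C/B}] \in K_0(C)$ as an alternating sum of classes of locally free sheaves, and then apply $\tau$ via its $K^0(C)$-module property. The two ingredients are the conormal short exact sequence for the Cartier divisor $j : C \hookrightarrow U_1$, available since $U_1$ is smooth along $C$ by Lemma~\ref{smoothalC},
\begin{equation*}
0 \to \cO_C(-C) \to j^*\gO_{U_1} \to \gO_C \to 0,
\end{equation*}
and the relative cotangent sequence for the composition ${\pi_1}|_C = \pi \circ j_1 : C \to B$,
\begin{equation*}
\pi_1^*\gO_B|_C \to \gO_C \to \gO_{C/B} \to 0.
\end{equation*}

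The main point to check is that the leftmost arrow in the second sequence is injective. By Lemma~\ref{lemFinite}(ii), the map ${\pi_1}|_C$ is finite, so every irreducible component of $C$ dominates $B$; in characteristic zero this forces ${\pi_1}|_C$ to be generically étale, hence the differential is nonzero at the generic point of every component of $C$. Since $\pi_1^*\gO_B|_C$ is a line bundle, its kernel in $\gO_C$ is a torsion-free subsheaf that vanishes at every generic point, and is therefore zero.

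Combining the two exact sequences in $K_0(C)$ yields
\begin{equation*}
[\gO_{C/B}] = [j^*\gO_{U_1}] - [\cO_C(-C)] - [\pi_1^*\gO_B|_C],
\end{equation*}
in which all three right-hand classes lie in $K^0(C)$. Applying $\tau$ together with the module formula $\tau(\gb) = \chchar(\gb) \cdot \tau(\cO_C)$ for $\gb \in K^0(C)$, and truncating Chern characters to the one-dimensional curve $C$, one has $\chchar(j^*\gO_{U_1}) = 2 + c_1(j^*\gO_{U_1})$, $\chchar(\cO_C(-C)) = 1 - j^*[C]$, and $\chchar(\pi_1^*\gO_B|_C) = 1 + j_1^*c_1(\pi^*\gO_B)$. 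The rank contributions cancel. Writing $\tau(\cO_C) = [C] + \gep$ with $\gep \in \CH_0(C)_\bQ$, the contributions of $\gep$ also cancel since the overall rank of the alternating combination is zero, and the products of a degree-one class with $\gep$ vanish for dimension reasons. What survives is the zero-dimensional class
\begin{equation*}
c_1(j^*\gO_{U_1}) \cap [C] + j^*[C] - j_1^*c_1(\pi^*\gO_B),
\end{equation*}
which is the desired identity.

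The only nontrivial step is the injectivity in the cotangent sequence, which rests on the finiteness of ${\pi_1}|_C : C \to B$ from Lemma~\ref{lemFinite}(ii) and the torsion-freeness of line bundles.
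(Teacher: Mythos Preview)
Your argument is correct and follows essentially the same route as the paper: both use the conormal sequence for the Cartier divisor $C \hookrightarrow U_1$ (available by Lemma~\ref{smoothalC}) together with the relative cotangent sequence for ${\pi_1}|_C : C \to B$, establish left exactness of the latter from finiteness of ${\pi_1}|_C$ (Lemma~\ref{lemFinite}) and torsion-freeness of the line bundle $\pi_1^*\gO_B|_C$, pass to $K_0(C)$, and then apply $\tau$ via the $K^0$-module property after writing $\tau(\cO_C) = [C] + \xi$. Your justification of the injectivity step (generic \'etaleness in characteristic zero, plus the fact that $C$, being a Cartier divisor in a smooth surface, has no embedded points) is in fact more explicit than the paper's, but the overall strategy is identical.
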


\begin{proof}
First of all, since ${\pi_1^*}_{|C}$ is a finite morphism by Lemma~\ref{lemFinite} and since ${\pi_1^*}_{|C}\gO_B$ is torsion-free, the cotangent sequence 
\begin{center}
\begin{tikzpicture}
\centering
\matrix (m) [matrix of math nodes, row sep=1.5em,
column sep=1.5em,text height=1.5ex, text depth=0.25ex]
{ 0 &{\pi_1^*}_{|C}\gO_B  & \gO_{C} & \gO_{C/B} & 0  \\};
\path[->,font=\scriptsize]
(m-1-1) edge node[auto] {} (m-1-2)
(m-1-2) edge node[left] {} (m-1-3)
(m-1-3) edge node[auto] {} (m-1-4)
(m-1-4) edge node[right] {} (m-1-5);
\end{tikzpicture}
\end{center} 
is exact on the left. On the other hand, since $C$ is a Cartier divisor on the smooth surface $U_1$, the conormal exact sequence
\begin{center}
\begin{tikzpicture}
\centering
\matrix (m) [matrix of math nodes, row sep=1.5em,
column sep=1.5em,text height=1.5ex, text depth=0.25ex]
{ 0 &\cO_C(-C)  & j^*\gO_{U_1} & \gO_{C} & 0  \\};
\path[->,font=\scriptsize]
(m-1-1) edge node[auto] {} (m-1-2)
(m-1-2) edge node[left] {} (m-1-3)
(m-1-3) edge node[auto] {} (m-1-4)
(m-1-4) edge node[right] {} (m-1-5);
\end{tikzpicture}
\end{center}
is also exact on the left. Therefore, if we write $\tau(\cO_C) = [C] + \xi$ for some $\xi \in \CH_0(C)_\bQ$, we have
\begin{align*}
\tau(\gO_{C/B}) & = \tau(j^*\gO_{U_1}) - \tau(\cO_C(-C)) - \tau({\pi_1^*}_{|C}\gO_B) \\
& = \(\ch(j^*\gO_{U_1}) - \ch(\cO_C(-C)) - \ch({\pi_1^*}_{|C}\gO_B) \) \cdot \([C] + \xi\) \\
& = c_1(j^*\gO_{U_1}) + j^*[C] - c_1({\pi_1^*}_{|C}\gO_B),
\end{align*}
which finishes the proof.

\end{proof} 

\begin{lem}
The following equality holds in $\CH_0(C)_\bQ$:
\begin{equation}\label{cotfact}
c_1({\gO_{U_1}}_{|C}) = j_1^*c_1(\gO_{U}) + j_2^*c_1(\gO_{U}) - j^*[R].
\end{equation}
\end{lem}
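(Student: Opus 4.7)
My plan is to use the conormal exact sequence for the codimension-$2$ closed immersion $U_1 \hookrightarrow U \times U$, which is a regular embedding near $C$ by Lemma~\ref{smoothalC}, and then to identify $c_1(N^*_{U_1/U\times U})$ with $[R]$ via an auxiliary morphism coming from the pullback of the conormal sheaf of the diagonal $\Delta_S \subset S\times S$. Concretely, since $(p_1,p_2)^*\gO_{U \times U} = p_1^*\gO_U \oplus p_2^*\gO_U$, taking first Chern classes of the conormal sequence
\[ 0 \to N^*_{U_1/U\times U} \to (p_1,p_2)^*\gO_{U \times U} \to \gO_{U_1} \to 0 \]
and restricting to $C$ via $j$ reduces (\ref{cotfact}) to the equality $c_1(j^*N^*_{U_1/U\times U}) = j^*[R]$ in $\CH_0(C)_\bQ$.

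To establish this, I would exploit the inclusion $U_1 \subset U \times_S U = (f,f)^{-1}(\Delta_S)$, which gives an ideal inclusion $I_{U \times_S U} \subset I_{U_1}$ and, since $N^*_{\Delta_S/S\times S} = \gO_S$, induces a natural morphism of rank-$2$ locally free sheaves near $C$:
\[ \psi : \phi^*\gO_S \to N^*_{U_1/U\times U}. \]
At points of $C$ with $z \neq z'$ one has $U_1 = U \times_S U$ scheme-theoretically by the analysis in the proof of Lemma~\ref{smoothalC}, so $\psi$ is an isomorphism and $\det\psi$ is a unit. At a cusp point $(z,z) \in C^\gs = C \cap R$, working in the local coordinates~(\ref{coordcusp}), the two defining equations $g_1, g_2$ of $U \times_S U$ satisfy the factorisation $g_2 \equiv (y-y')\cdot h \pmod{g_1}$ from the proof of Lemma~\ref{smoothdlbeloc}, with $(g_1, h)$ generating $I_{U_1}$. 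In the bases $(g_1,g_2)$ of $\phi^*\gO_S$ and $([g_1],[h])$ of $N^*_{U_1/U\times U}$, the matrix of $\psi$ is therefore upper triangular with diagonal $(1,(y-y')|_{U_1})$, and $(y-y')|_{U_1}$ cuts out $R$ on $U_1$ as a reduced Cartier divisor by the computation in the proof of Lemma~\ref{invcusp}.

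Therefore $\det \psi$ defines an isomorphism $\det N^*_{U_1/U\times U} \otimes \phi^*(\det \gO_S)^{-1} \simeq \cO_{U_1}(R)$. Since $\det \gO_S = K_S = \cO_S$ for the $K3$ surface $S$, we obtain $c_1(N^*_{U_1/U\times U}) = [R]$ in $\CH^1(U_1)$; pulling back by $j$ and substituting into the Chern-class computation from the first paragraph yields (\ref{cotfact}). The main obstacle is the local verification at cusps: one has to check that the residual ideal $I_{U_1}$ is indeed correctly generated by $(g_1, h)$ with $h$ as claimed, and that the induced matrix of $\psi$ takes the stated upper triangular form. This computation relies crucially on the cuspidal local model provided by condition~$(E)$.
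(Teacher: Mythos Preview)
Your argument is correct and follows a genuinely different route from the paper's.

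The paper proceeds asymmetrically through $p_1$ and $p_2$: it first uses the injection $p_1^*\gO_U \hookrightarrow \gO_{U_1}$, then identifies the quotient (up to the twist by $R$ coming from the residual relation between $U_1$, $\Delta_U$ and $U\times_S U$) with $p_2^*\gO_{U/S}$ by base change for relative differentials; finally it computes $c_1(j_2^*\gO_{U/S}) = j_2^*c_1(\gO_U)$ from the sequence $f^*\gO_S \to \gO_U \to \gO_{U/S}$ and the vanishing $c_1(\gO_S)=0$. Your approach is instead symmetric in $p_1,p_2$: you take the conormal sequence of $U_1 \hookrightarrow U\times U$ directly and identify $c_1(N^*_{U_1/U\times U})$ by comparing it, via the natural map $\psi$, to the pullback of the conormal bundle of $\Delta_S$. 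This makes the appearance of $[R]$ quite transparent---it is exactly the degeneracy locus of $\psi$---at the price of the explicit local verification at cusps, which however is already available from the proofs of Lemmas~\ref{smoothdlbeloc} and~\ref{invcusp}. Both arguments invoke $K_S=\cO_S$ at the same point.

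One small remark: your final isomorphism $\det N^*_{U_1/U\times U}\otimes \phi^*K_S^{-1}\simeq \cO_{U_1}(R)$ is stated on $U_1$, yet your case analysis only runs over points of $C$. This is harmless here, since an analytic neighbourhood of $C$ in $U_1$ meets $R$ only near the cusp points $C^\gs$, where your local computation already shows $\det\psi = (y-y')|_{U_1}$ cuts out $R$ with multiplicity one; alternatively, you can restrict to $C$ from the start and read off $c_1(j^*N^*_{U_1/U\times U}) = [C^\gs] = j^*[R]$ directly from the vanishing of $(\det\psi)|_C$, which is all that equation~(\ref{cotfact}) requires.
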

\begin{proof}
First of all, since $p_1 :U_1 \to U$ is finite and $U$ is smooth, $p_1^*\gO_U \to \gO_{U_1}$ is injective. Next, since $p_2^*\gO_{U/S} = \(\gO_{U\times_S U/U}\)_{|U_1}$ and ${\gO_{U\times_S U}}_{|U_1} = \gO_{U_1}(R)$, one has $p_2^*\gO_{U/S} = \gO_{U_1}(R)/p_1^*\gO_U$.  So 
$$c_1(j^*p_2^*\gO_{U/S}) = c_1({\gO_{U_1}}_{|C}) + j^*[R] - j^*p_1^*c_1(\gO_U)$$ 
in $\CH_0(C)$. Finally since $f^*\gO_S \to \gO_U$ is injective and since $j_2 = p_2 \circ j$ is a local complete intersection morphism (indeed, $j_2$ factorizes through $C \hookrightarrow U \times U \to U$ where the first arrow is the closed embedding of $C$ into $U \times U$ and the second arrow is the projection onto the second factor, which is a smooth morphism), we deduce that in $\CH_0(C)_\bQ$,
$$\mathrm{td}(T_{j_2}) \cdot j_2^*\(\tau(f^*\gO_S)- \tau(\gO_U) + \tau(\gO_{U/S})\) = 0$$
If we write $\mathrm{td}(T_{j_2}) = [C] + \xi'$ for some $\xi' \in \CH_0(C)_\bQ$, then we get 
$$c_1(j_2^*\gO_{U/S}) = j_2^*\(c_1(\gO_U) - f^*c_1(\gO_S)\) = j_2^*c_1(\gO_U)$$ in $CH_0(C)_\bQ$. Hence the identity~(\ref{cotfact}) is proven.
\end{proof}

Recall from the definition of $C$ that $[C] = [(\pi_1,\pi_2)^{-1}(\Delta_{B})] - [R]$; for simplicity, we now define 
\begin{equation}\label{Cprime}
C' \colonec (\pi_1,\pi_2)^{-1}(\Delta_{B}).
\end{equation} 
Combining equalities~(\ref{cotbase}) and~(\ref{cotfact}), we obtain
\begin{equation}\label{exprestau}
\tau(\gO_{C/B}) =  j_1^*c_1(\gO_{U}) + j_2^*c_1(\gO_{U})  - j_1^*c_1({\pi^*}\gO_B) + j^*[C'] - 2j^*[R].
\end{equation}

From now on, we fix a $1$-cycle $\ga \in \CH_1(\wt{\gS})$  such that ${\ga}_{|U} = (j_i)_*[C]$ for $i=1,2$. The following result is the key point of the whole computation.
\begin{lem}\label{decompC}
There exists a $1$-cycle $V \in \CH^1(\wt{\gS})$ supported on fibers of $\pi:\wt{\gS} \to \ol{B}$ such that $\ga = V + \tilde{f}^*L$  in $\CH^1(\wt{\gS})_\bQ$.
\end{lem}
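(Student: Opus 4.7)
The plan is to reduce the claim to a computation on a very general smooth fiber $E$ of $\tilde\pi$, exploiting the fact that for the non-isotrivial elliptic fibration $\tilde\pi : \wt{\gS} \to \ol{B}$ the kernel of the restriction map $\Pic(\wt{\gS}) \to \Pic(E_\eta)$ to the generic fiber is generated by the classes of components of closed fibers of $\tilde\pi$. It will therefore suffice to verify $(\ga - \tilde{f}^*L)|_E = 0$ in $\Pic(E)$ for $E = \wt{\gS}_b$ with $b$ general in $B$; the rational equivalence $\ga - \tilde{f}^*L \sim V$ with $V$ vertical will then follow (no passage to $\bQ$ is strictly needed for this step, but the statement only asks for it in $\CH^1(\wt{\gS})_\bQ$).

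First I would observe that for a general $b \in B$, the primitivity of $L$ together with $\Pic(S) = \bZ \cdot L$ forces $\tilde{f}|_E : E \to \tilde{f}(E) \in |L|$ to be the normalization $\nu$ of a nodal elliptic curve with $\delta := g - 1$ nodes (where $g = L^2/2 + 1$). Then I would compute $\ga|_E$ using the local description of $C$ in the unramified case carried out in case (1) of the proof of Lemma~\ref{smoothalC}: this identifies $[C] \cap \pi_1^{-1}(b)$ with a reduced $0$-cycle of length $2\delta$ consisting of the ordered pairs $(x,z) \in E \times E$ with $x \ne z$ and $\tilde{f}(x) = \tilde{f}(z)$. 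Pushing forward by $j_1$ one obtains
\[
\ga|_E = \sum_{n} (p_n + q_n) \in \Pic(E),
\]
where $n$ ranges over the nodes of $\tilde{f}(E)$ and $p_n, q_n$ denote their two preimages under $\nu$.

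For the other side I would invoke adjunction on $S$: since $K_S = 0$ gives $L|_{\tilde{f}(E)} \simeq \omega_{\tilde{f}(E)}$, and the standard formula for the pullback of the dualizing sheaf of a nodal curve under its normalization yields
\[
\nu^*\omega_{\tilde{f}(E)} \simeq \omega_E\Bigl(\sum_{n}(p_n + q_n)\Bigr) \simeq \cO_E\Bigl(\sum_{n}(p_n + q_n)\Bigr)
\]
(the last isomorphism using that $E$ is elliptic), we conclude that $(\tilde{f}^*L)|_E$ and $\ga|_E$ coincide in $\Pic(E)$. Hence $(\ga - \tilde{f}^*L)|_E = 0$ for a general closed fiber, hence also on the generic fiber, and the lemma follows.

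The main technical point I expect is the multiplicity-one assertion in the description of $\ga|_E$, which depends on the smoothness of $C$ along the non-ramified locus established in Lemma~\ref{smoothalC} and on the identification of the fiber of $C$ over a general $b \in B$ with the set of ordered pairs of preimages of nodes of $\tilde{f}(E)$; one also needs to check that a general fiber $E$ contains no cusp point of $\tilde{f}$, which is immediate from condition $(E)$ since the cusp locus $Z_\cusp$ is $0$-dimensional. The adjunction-theoretic matching of the two restrictions and the Picard-theoretic descent from general closed fibers to the generic one are then routine.
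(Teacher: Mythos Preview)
Your proof is correct and follows the same two–step skeleton as the paper's: first establish the fiberwise identity $\ga|_E=(\tilde f^*L)|_E$ for a general smooth fiber $E$, then deduce that $\ga-\tilde f^*L$ is vertical. The tools differ, however. The paper obtains the fiberwise identity in one stroke by invoking Fulton's double point formula for $f|_E:E\to S$ (the correction term vanishes because $K_S$ and $K_E$ are trivial), and then appeals to Voisin's spreading principle to pass from fibers to the total space. You instead identify both sides explicitly with $\sum_n(p_n+q_n)$, computing $(\tilde f^*L)|_E$ via adjunction on $S$ together with the normalization formula $\nu^*\omega_{\tilde f(E)}\simeq\omega_E(\sum_n(p_n+q_n))$, and you spread by the elementary fact that the kernel of $\Pic(\wt\gS)\to\Pic(E_\eta)$ consists of vertical classes. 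Your route is more self-contained and, as you note, already gives the statement integrally; the paper's route is terser but imports two external results, the second of which (the spreading principle) is considerably more general than what is needed here.

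Two small remarks. First, the non-isotriviality hypothesis plays no role in your spreading step: the description of $\ker\bigl(\Pic(\wt\gS)\to\Pic(E_\eta)\bigr)$ as vertical classes holds for any fibered surface, and the passage from ``$\cL|_{E_b}\simeq\cO$ for general closed $b$'' to ``$\cL|_{E_\eta}\simeq\cO$'' follows from semicontinuity of $h^0$ (since $\deg\cL|_{E_b}=0$). Second, the multiplicity-one claim for $\ga|_E$ is indeed the only point requiring care; it follows, as you say, from the smoothness of $U_1$ along $C$ over the unramified locus (case~(1) of Lemma~\ref{smoothalC}) together with the finiteness of $\pi_1|_C$, which forces the fiber of $C$ over a general $b$ to be reduced.
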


\begin{proof}
Let $U^\circ \subset B$ be the Zariski open set parameterizing nodal elliptic curves in $|L|$.  Since $j_1(C) \cap \gS_x$ is the double point locus (defined in~\cite[Chapter $9$]{Fulton}) of the restriction of $f$ to $\gS_x$, its class in $\CH_0(\gS_x)$ is equal to $\(f^*f_*[\gS_x]\)_{|\gS_x} = \(f^*L\)_{|\gS_x}$ by the double point formula~\cite[Theorem $9.3$]{Fulton} (Since $K_S$ and $K_{\gS_x}$ are trivial). The lemma follows from the "spreading principle"~\cite[Theorem $1.2$]{Voisinchowdecomp}. 
\end{proof}

\begin{cor}\label{cordecompC}
If $D \in \CH_1(\wt{\gS})$ is a $1$-cycle supported on the fibers of $\tilde{\pi} : \wt{\gS} \to \ol{B}$, then $\tilde{f}_*(\ga \cdot D) \in \CH_0(S)_\bQ$ is  proportional to $o_S$, or equivalently $\tilde{f}_*(\ga \cdot D) = 0$ in $\CH_0(S)_\bQ /\bQ o_S$.
$$$$
\end{cor}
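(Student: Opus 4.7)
The plan is to use Lemma~\ref{decompC} to split $\ga = V + \tilde{f}^* L$ with $V$ supported on fibers of $\tilde{\pi}$, so that
\[
\tilde{f}_*(\ga \cdot D) \;=\; \tilde{f}_*(V \cdot D) \;+\; \tilde{f}_*(\tilde{f}^* L \cdot D)
\]
in $\CH_0(S)_\bQ$, and then show that each of the two summands is proportional to $o_S$ by separate arguments.

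For the second summand I would apply the projection formula to get $\tilde{f}_*(\tilde{f}^* L \cdot D) = L \cdot \tilde{f}_* D$. Each irreducible component of $D$ lies in a fiber of $\tilde{\pi}$, and its image under $\tilde{f}$ is either a point (if the component is the contracted elliptic part of a type $(iii)$ fiber) or a curve. In the latter case, because $\Pic(S) = \bZ \cdot L$ by condition $(E)$, every such image curve has class a multiple of $L$ in $\CH^1(S)$. Hence $\tilde{f}_* D$ is a multiple of $L$ in $\CH^1(S)$, and Beauville--Voisin's Theorem~\ref{BVmain} $(i)$ gives $L \cdot \tilde{f}_*D \in \bQ \cdot o_S$.

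For the first summand, $V$ and $D$ are both supported on fibers of $\tilde{\pi}$, so $V\cdot D$ is supported on the (finite) set of fibers containing components of both. I would then analyze fiber by fiber using the three types listed after Lemma~\ref{mod}. On a fiber of type $(i)$ (smooth elliptic) or $(ii)$ (irreducible rational), the restrictions of $V$ and $D$ are multiples of the fiber class $F$, and the contribution to $V \cdot D$ is a multiple of $F^2 = 0$. On a type $(iii)$ fiber $F = E_c + R$, the relations $F^2 = 0$ and $F \cdot E_c = 0$ give $E_c^2 = R^2 = -E_c \cdot R$, so the contribution to $V\cdot D$ is an integer multiple of the $0$-cycle $E_c \cdot R$, which is supported on the finite set $E_c \cap R$. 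Under $\tilde{f}$, $E_c$ collapses to a point lying on the rational curve $\tilde{f}(R) \subset S$, so $\tilde{f}_*(E_c \cdot R)$ is supported on a rational curve and is thus proportional to $o_S$ by the definition of the Beauville--Voisin cycle.

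Combining the two analyses yields $\tilde{f}_*(\ga \cdot D) \in \bQ \cdot o_S$. The only delicate point is the type $(iii)$ case in the first summand: one must use the surface-theoretic relations among the components of a reducible fiber rather than just the set-theoretic support, since \emph{a priori} the intersection could produce points on the (non-rational) image of a smooth elliptic fiber; the condition $\Pic(S) = \bZ \cdot L$ is what allows the second-summand argument to conclude via Beauville--Voisin, and the contraction of the elliptic component of type $(iii)$ fibers by $\tilde{f}$ is what makes the first-summand argument conclude.
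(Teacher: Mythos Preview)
Your proof is correct and follows the same decomposition $\ga = V + \tilde{f}^*L$ and projection-formula argument as the paper. The paper's proof is terser: it simply asserts $D\cdot V = 0$ and concludes $\tilde{f}_*(\ga\cdot D) = L\cdot\tilde{f}_*D$, then invokes Theorem~\ref{BVmain}\,$(i)$. Your fiber-by-fiber analysis of the $V\cdot D$ term is in fact more careful than the paper's: when type~$(iii)$ fibers are present, $V\cdot D$ need not vanish in $\CH_0(\wt{\gS})_\bQ$ (it is a multiple of $E_c\cdot R$ on each such fiber, as you compute), and what one really uses is that its support lies in $\wt{\gS}-U$, so that Lemma~\ref{mod} gives $\tilde{f}_*(V\cdot D)\in\bQ\, o_S$. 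So your argument patches what the paper glosses over.

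One small simplification: in the second summand you do not need $\Pic(S)=\bZ\cdot L$ to conclude. Since $\tilde{f}_*D\in\CH^1(S)=\Pic(S)$ automatically, Theorem~\ref{BVmain}\,$(i)$ applies directly to the product $L\cdot\tilde{f}_*D$ without first identifying $\tilde{f}_*D$ as a multiple of $L$.
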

\begin{proof}
By Lemma~\ref{decompC}, we have
$$\tilde{f}_*\( \ga \cdot D \) = L \cdot \tilde{f}_*D,$$
because $D \cdot V = 0$ and by the projection formula.
So $\tilde{f}_*\( \ga \cdot D \) = 0$  in $\CH_0(S)_\bQ /\bQ o_S$ by Theorem~\ref{BVmain} $(i)$.
\end{proof}

Using Corollary~\ref{cordecompC}, we will prove the two following lemmata:

\begin{lem}\label{vsupp1}
\hfill
\begin{enumerate}[i)]
\item There exists a $0$-cycle $z \in \CH_0(\wt{\gS})$ whose restriction to $U$ is  $ (j_1)_*j_1^*c_1({\pi^*}\gO_B)$ such that $\tilde{f}_*z = 0$ in $\CH_0(S)_\bQ /\bQ o_S$.
\item There exists a $0$-cycle $z_i \in \CH_0(\wt{\gS})$ for $i=1,2$ whose restriction to $U$ is  $(j_i)_*j_i^*c_1(\gO_{U})$ such that $\tilde{f}_*z_i = 0$ in $\CH_0(S)_\bQ /\bQ o_S$.
\end{enumerate}
\end{lem}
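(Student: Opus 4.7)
The plan is to realize both $z$ and $z_i$ as intersection products $D\cdot\ga \in \CH_0(\wt{\gS})$ for an appropriately chosen divisor class $D$ on $\wt{\gS}$. I want $D$ to satisfy (a) $D_{|U} = c_1(\gL)$, with $\gL = \pi^*\gO_B$ in part (i) and $\gL = \gO_U$ in part (ii), and (b) $D$ is rationally equivalent to a vertical divisor for $\tilde{\pi} : \wt{\gS} \to \ol{B}$. Granting both, the projection formula for $j_i : C \to U$ together with the identity $(j_i)_*[C] = \ga_{|U}$ (valid for $i = 1, 2$ since $j_1 = j_2\circ\gs$ and $\gs$ is an involution of $C$) gives $(D\cdot\ga)_{|U} = c_1(\gL)\cdot\ga_{|U} = (j_i)_*j_i^*c_1(\gL)$, yielding the required restriction property. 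Condition (b) together with Corollary~\ref{cordecompC} then implies $\tilde{f}_*(D\cdot\ga) = 0$ in $\CH_0(S)_\bQ/\bQ o_S$.

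For part (i), the choice of $D$ is immediate: pick any line bundle $\bar{L}$ on the smooth projective curve $\ol{B}$ whose restriction to $B$ is $\gO_B$ (for instance $\bar{L} = \gO_{\ol{B}}$), and set $D \colonec \tilde{\pi}^* c_1(\bar{L})$. As the pullback of a $0$-cycle from $\ol{B}$, $D$ is automatically a $\bZ$-linear combination of fibers of $\tilde{\pi}$, so condition (b) is tautological.

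For part (ii), the natural choice is $D = K_{\wt{\gS}}$, which tautologically restricts to $K_U = c_1(\gO_U)$ on $U$. The key point is then to verify (b). Since $\tilde{f} : \wt{\gS} \to S$ is a generically finite morphism between smooth surfaces of the same dimension, the ramification formula reads $K_{\wt{\gS}} = \tilde{f}^* K_S + R_{\tilde{f}}$ in $\Pic(\wt{\gS})$. As $S$ is a $K3$ surface, $K_S = 0$, and therefore $K_{\wt{\gS}} = R_{\tilde{f}}$; Lemma~\ref{ramfib} then identifies $R_{\tilde{f}}$ as a divisor supported on fibers of $\tilde{\pi}$, which is precisely condition (b).

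The only subtlety I foresee is the validity of the ramification formula in spite of $\tilde{f}$ failing to be finite (the elliptic components in type~(iii) fibers of $\tilde{\pi}$ get contracted by $\tilde{f}$). Since $\tilde{f}$ is generically finite between smooth surfaces of equal dimension, however, the formula remains valid with $R_{\tilde{f}}$ defined as the effective divisor cut out by the Jacobian of $\tilde{f}$, and this is the same divisor whose support is controlled by Lemma~\ref{ramfib}, so no essential new difficulty arises.
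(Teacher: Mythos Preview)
Your argument is correct and coincides with the paper's proof: the paper sets $z = \ga \cdot \tilde{\pi}^*c_1(\gO_{\ol{B}})$ for (i) and $z_i = \ga \cdot c_1(\gO_{\wt{\gS}})$ for (ii), then invokes Corollary~\ref{cordecompC} exactly as you do. The only difference is that the paper simply asserts ``$c_1(\gO_{\wt{\gS}})$ is supported on fibers of $\tilde{\pi}$'' without comment, whereas you supply the justification via $K_{\wt{\gS}} = \tilde{f}^*K_S + R_{\tilde f} = R_{\tilde f}$ and Lemma~\ref{ramfib}; this is a welcome clarification rather than a different route.
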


\begin{proof}
Since  ${\ga}_{|U} = (j_1)_*[C]$,  the restriction to $U$ of  $z \colonec \ga \cdot \tilde{\pi}^*c_1(\gO_{\ol{B}}) \in \CH_0(\wt{\gS})$ is $(j_1)_*j_1^*c_1({\pi^*}\gO_B)$. Applying Corollary~\ref{cordecompC} to $D = \tilde{\pi}^*c_1(\gO_{\ol{B}})$ allows to conclude.
The same argument applied to $z_i \colonec \ga \cdot c_1(\gO_{\wt{\gS}})$ proves the lemma for $(j_i)_*j_i^*c_1(\gO_{U})$, since $c_1(\gO_{\wt{\gS}})$ is supported on fibers of $\tilde{\pi} : \wt{\gS} \to \ol{B}$.

\end{proof}

Recall that $C'$ is defined in~(\ref{Cprime}).
\begin{lem}\label{vsupp2}
There exists a $0$-cycle $z' \in \CH_0(\wt{\gS})$ whose restriction to $U$ is  $(j_1)_*j^*\([C']\)$ such that $\tilde{f}_*z' = 0$ in $\CH_0(S)_\bQ /\bQ o_S$.
\end{lem}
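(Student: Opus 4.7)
The plan is to reduce Lemma~\ref{vsupp2} to Corollary~\ref{cordecompC} exactly as in the proof of Lemma~\ref{vsupp1}(i), once the line bundle $\cO_{U_1}(C')|_C$ has been correctly identified. Indeed, if we can write $(j_1)_*j^*[C'] = \beta|_U$ for some class $\beta \in \CH_0(\wt{\gS})$ of the form $\alpha \cdot D$ with $D$ a $1$-cycle on $\wt{\gS}$ supported on fibers of $\tilde{\pi}$, Corollary~\ref{cordecompC} immediately yields $\tilde{f}_*\beta \equiv 0 \pmod{o_S}$.

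The key observation is a normal-bundle computation. By the definition $C' = (\pi_1,\pi_2)^{-1}(\Delta_B)$ and the fact that $\Delta_B \subset B \times B$ is a Cartier divisor, one has $\cO_{U_1}(C') \simeq (\pi_1,\pi_2)^*\cO_{B \times B}(\Delta_B)$. Since $C \subset C'$ forces $\pi_1|_C = \pi_2|_C$, the restriction $(\pi_1,\pi_2)|_C$ factors as $\Delta_B \circ (\pi_1|_C)$, and the standard self-intersection identity $\Delta_B^*\cO(\Delta_B) \simeq T_B$ gives
\[j^*\cO_{U_1}(C') \simeq (\pi_1|_C)^* T_B = j_1^*\pi^*T_B.\]
Taking first Chern classes yields $j^*[C'] = j_1^* c_1(\pi^* T_B) = -j_1^* c_1(\pi^* \gO_B)$ in $\CH_0(C)$. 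The projection formula combined with $(j_1)_*[C] = \alpha|_U$ then gives
\[(j_1)_* j^*[C'] = c_1(\pi^* T_B) \cdot \alpha|_U.\]

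Since $B \subset \ol{B}$ is open, $T_{\ol{B}}|_B = T_B$, so the right-hand side is the restriction to $U$ of the global $0$-cycle $z' \colonec \alpha \cdot c_1(\tilde{\pi}^* T_{\ol{B}}) \in \CH_0(\wt{\gS})$. As $c_1(\tilde{\pi}^* T_{\ol{B}}) = \tilde{\pi}^* c_1(T_{\ol{B}})$ is a $1$-cycle supported on finitely many fibers of $\tilde{\pi}$, Corollary~\ref{cordecompC} yields $\tilde{f}_* z' = 0$ in $\CH_0(S)_\bQ / \bQ o_S$, completing the proof.

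The only genuinely new step is the normal-bundle identification $\cO_{U_1}(C')|_C \simeq j_1^*\pi^*T_B$; once this is in place, everything else is a verbatim rerun of the argument for Lemma~\ref{vsupp1}(i), and indeed $z'$ coincides up to sign with the cycle $z$ constructed there (since $c_1(T_{\ol{B}}) = -c_1(\gO_{\ol{B}})$). The mild technical point to keep in mind is that the isomorphism $\cO_{U_1}(C') \simeq (\pi_1,\pi_2)^*\cO(\Delta_B)$ uses that $C'$ is a Cartier divisor in $U_1$, which is guaranteed by the smoothness of $U_1$ along $C$ established in Lemma~\ref{smoothalC}.
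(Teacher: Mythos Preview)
Your proof is correct and in fact a little cleaner than the paper's own argument. Both proofs arrive at the same intermediate formula
\[
(j_1)_*j^*[C'] \;=\; \pi^*\beta \cdot (j_1)_*[C]
\]
for a suitable class $\beta \in \CH_0(B)$, after which one sets $z' = \alpha \cdot \tilde{\pi}^*(\text{extension of }\beta)$ and applies Corollary~\ref{cordecompC}. The difference lies in how this formula is obtained. The paper pushes the computation through $U\times U$: it writes $(j_1)_*j^*[C'] = (\pr_1)_*\bigl((\pi,\pi)^*[\Delta_B]\cdot (p_1,p_2)_*[C]\bigr)$ and then uses the Cartesian square
\[
\begin{CD}
\Delta_U @>>> U\times U\\
@VVV @VV(\pi,\pi)V\\
\Delta_B @>>> B\times B
\end{CD}
\]
to identify $j_U^*(\pi,\pi)^*[\Delta_B]$ with a pullback $\pi^*b_0$, without ever naming $b_0$ explicitly. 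Your route is the sheaf-theoretic shortcut: since $C' = (\pi_1,\pi_2)^{-1}(\Delta_B)$ one has $\cO_{U_1}(C')\simeq (\pi_1,\pi_2)^*\cO_{B\times B}(\Delta_B)$ along $C$, and because $(\pi_1,\pi_2)|_C$ factors through the diagonal, the self-intersection identity $\Delta_B^*\cO(\Delta_B)\simeq T_B$ immediately gives $j^*[C'] = j_1^*\pi^*c_1(T_B)$. This not only bypasses the diagram chase but also identifies the paper's unnamed class $b$ as $c_1(T_B) = -c_1(\gO_B)$, so that your $z'$ is literally $-z$ from Lemma~\ref{vsupp1}(i). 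In effect you have shown that Lemma~\ref{vsupp2} is a corollary of Lemma~\ref{vsupp1}(i) rather than an independent computation.

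The one place to be slightly careful is the sentence about $C'$ being Cartier in $U_1$: you only need this in a neighbourhood of $C$, and there it follows (as you note) from the smoothness of $U_1$ along $C$ established in Lemma~\ref{smoothalC}; alternatively, $C'$ is automatically Cartier near $C$ because it is the pullback of the Cartier divisor $\Delta_B$ and no component of $U_1$ maps into $\Delta_B$. Either justification is fine.
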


\begin{proof}

Let $\pr_1:U \times U \to U$ be the first projection, then the following holds in $\CH_0(U)$:

\begin{align*}
(j_1)_*j^*\([C']\) & = (p_1)_* \([C] \cdot [C'] \) \\
& =  (\pr_1)_*(p_1,p_2)_*\((p_1,p_2)^*(\pi,\pi)^*[\Delta_B] \cdot [C]\) \\
& = (\pr_1)_*\((\pi,\pi)^*[\Delta_B] \cdot (p_1,p_2)_*[C]\).
\end{align*}

Now let $j_U$ be the inclusion map of the diagonal $\Delta_U \hookrightarrow U \times U$, on has
\begin{align*}
(\pr_1)_*\((\pi,\pi)^*[\Delta_B] \cdot (p_1,p_2)_*[C]\) & = {\pr_1}_*  {j_U}_*  \( {j_U}^*(\pi,\pi)^*[\Delta_B] \cdot (\pr_1 \circ j_U)^{-1}_*(j_1)_*[C]\) \\
& = \(\pr_1 \circ {j_U}\)_*  \( {j_U}^*(\pi,\pi)^*[\Delta_B]\) \cdot (j_1)_*[C].
\end{align*}
Since 
\begin{center}
\begin{tikzpicture}
\centering
\matrix (m) [matrix of math nodes, row sep=1.5em,
column sep=1.5em]
{ \Delta_U &  U \times U  & U\\
\Delta_B & B \times B   & B\\};
\path[->,font=\scriptsize]
(m-1-1) edge node[auto] {$j_U$} (m-1-2)
(m-1-2) edge node[auto] {$\pr_1$} (m-1-3)
(m-2-2) edge node[auto] {$\pr_1$} (m-2-3)
(m-1-1) edge node[left] {$\pi$} (m-2-1)
(m-2-1) edge node[auto] {$j_B$} (m-2-2)
(m-1-3) edge node[right] {$\pi$} (m-2-3)
(m-1-2) edge node[right] {$(\pi,\pi)$} (m-2-2);
\end{tikzpicture}
\end{center}
is a Cartesian diagram, the left square shows that ${j_U}^*(\pi,\pi)^*[\Delta_B] = \pi^*b_0$ for some  $b_0 \in \CH_0(\Delta_B)$. So \emph{via} the isomorphisms $\pr_1 \circ {j_U}$ and $\pr_1 \circ {j_B}$ in the above Cartesian diagram, if $b \colonec \(\pr_1 \circ {j_B}\)_*b_0 \in \CH_0(B)$, then
$$\(\pr_1 \circ {j_U}\)_*{j_U}^*(\pi,\pi)^*[\Delta_B] = \pi^*b.$$

Therefore, 
$$(j_1)_*j^*\([C']\) = \(\pr_1 \circ {j_U}\)_*  \( {j_U}^*(\pi,\pi)^*[\Delta_B]\) \cdot (j_1)_*[C]=   \pi^*b \cdot (j_1)_*[C].$$
Let $z' \colonec \ga \cdot \tilde{\pi}^*b$; we conclude by applying Corollary~\ref{cordecompC}  to the $1$-cycle $D = \tilde{\pi}^*b$.

\end{proof}

\begin{cor}\label{vsuppcor}
The following equality modulo $0$-cycles supported on rational curves holds in $\CH_0(S)_\bQ / \bQ o_S$:
\begin{equation}\label{eqnn}
\tilde{f}_*\tau(\gO_{C/B}) = -2\tilde{f}_*\([C]\cdot[R]\) = -2\tilde{f}_*\([C^\gs]\).
\end{equation}
\end{cor}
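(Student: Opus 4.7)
The plan is to apply the pushforward $\phi_* = \tilde{f}_* \circ (j_1)_* = \tilde{f}_* \circ (j_2)_*$ to both sides of the expression~(\ref{exprestau}) for $\tau(\gO_{C/B})$ and show that four of the five terms vanish in $\CH_0(S)_\bQ / \bQ o_S$, leaving only the desired contribution $-2\tilde{f}_*\bigl([C]\cdot[R]\bigr)$.

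First I would observe that every zero-cycle supported on $U$ admits a canonical extension to a zero-cycle on $\wt{\gS}$ (via the open inclusion $U\hookrightarrow \wt{\gS}$), and any two such extensions differ by a cycle supported on $\wt{\gS}\setminus U$. By Lemma~\ref{mod}, the $\tilde{f}_*$-image of such a cycle lies in $\bZ o_S$, so any vanishing result on $\wt{\gS}$ modulo $\bQ o_S$ transfers to the corresponding statement about the restriction to $U$. This legitimates the direct application of Lemmas~\ref{vsupp1} and~\ref{vsupp2} to kill the three terms $(j_1)_*j_1^*c_1(\gO_U)$, $(j_1)_*j_1^*c_1(\pi^*\gO_B)$, and $(j_1)_*j^*[C']$ appearing in $\tilde{f}_*\tau(\gO_{C/B})$.

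The mixed term $(j_1)_*j_2^*c_1(\gO_U)$ needs one extra step: since $\phi = \tilde{f}\circ j_1 = \tilde{f}\circ j_2$, we have $\tilde{f}_*(j_1)_* = \tilde{f}_*(j_2)_*$ on $\CH_0(C)$, so after $\tilde{f}_*$ this term is identified with the contribution of $(j_2)_*j_2^*c_1(\gO_U)$, which is killed by Lemma~\ref{vsupp1}(ii) with $i=2$. After these reductions, the only surviving term is $-2(j_1)_*j^*[R]$. Since $C$ is a Cartier divisor in the smooth surface $U_1$ (Lemma~\ref{smoothalC}) and $R$ is a Cartier divisor in $U_1$ not sharing any component with $C$, the pullback $j^*[R]$ is represented by the scheme-theoretic intersection $C\cap R$, which by Lemma~\ref{cuspram} is precisely $C^\gs$; the reducedness of $C^\gs$ (noted in the proof of Lemma~\ref{invcusp}) gives $j^*[R] = [C^\gs]$ in $\CH_0(C)$. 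Pushing forward under $\phi$ then yields
$$\tilde{f}_*\tau(\gO_{C/B}) \equiv -2\tilde{f}_*(j_1)_*[C^\gs] = -2\tilde{f}_*\bigl([C]\cdot[R]\bigr) = -2\tilde{f}_*[C^\gs] \pmod{\bQ o_S},$$
as required.

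There is no substantive obstacle here: the nontrivial work has already been carried out in Lemmas~\ref{vsupp1}, \ref{vsupp2} and~\ref{cuspram}, and the proof of the corollary reduces to term-by-term bookkeeping of pushforwards across $C$, $U$, $U_1$, $\wt{\gS}$, and $S$. The only mild subtlety is keeping track of which of $j_1$ or $j_2$ is used, but since $\tilde{f}\circ j_1 = \tilde{f}\circ j_2 = \phi$ this is harmless at the level of $\CH_0(S)$.
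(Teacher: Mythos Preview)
Your proof is correct and follows essentially the same approach as the paper's own argument: apply $\tilde{f}_*(j_1)_*$ to the expression~(\ref{exprestau}), use Lemmas~\ref{vsupp1} and~\ref{vsupp2} together with $\tilde{f}\circ j_1=\tilde{f}\circ j_2$ to annihilate the first four terms, and invoke Lemma~\ref{cuspram} for the identification $j^*[R]=[C^\gs]$. Your treatment is slightly more explicit than the paper's (in particular the discussion of extension from $U$ to $\wt{\gS}$ and the scheme-theoretic justification of $j^*[R]=[C^\gs]$), but there is no substantive difference.
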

Before we start the proof, we need to clarify the meaning of the above equalities, which have to be understood using Lemma~\ref{mod} as follows: for any $0$-cycle $\xi \in \CH_0(U)$, we define $\ol{\tilde{f}_*\xi} \in \CH_0(S)_\bQ / \bQ o_S $ to be $\tilde{f}_*\gb$  modulo $\bQ o_S$  for any $0$-cycle $\gb \in \CH_0(\wt{\gS})_\bQ$ such that $\gb_{|U} = \xi$. This definition is independent of the choice of $\gb$  by virtue of Lemma~\ref{mod}, hence the identities in Corollary~\ref{vsuppcor} do make sense.

\begin{proof}[Proof of Corollary~\ref{vsuppcor}]
Note that the second equality results directly from Lemma~\ref{cuspram}.
Since $f \circ j_1 = f \circ j_2$, we deduce from Lemmata~\ref{vsupp1} and~\ref{vsupp2} that
$$\tilde{f}_*(j_1)_* \(j_1^*c_1(\gO_{U}) + j_2^*c_1(\gO_{U})  - j_1^*c_1({\pi^*}\gO_B) + j^*[C']\) = \tilde{f}_*\(z_1+z_2-z+z'\) = 0$$
in  $\CH_0(S)_\bQ / \bQ o_S$. Now the corollary follows easily from equality~(\ref{exprestau}).
\end{proof}

We finish the proof of Theorem~\ref{main} as follows:
\begin{proof}[Proof of Theorem~\ref{main}]
By Proposition~\ref{det}, there exists a $1$-cycle $D$ supported on the fibers of $\tilde{\pi} : \wt{\gS} \to \ol{B}$ such that $[W]$ is the restriction of $\ga \cdot D$ to $U$. Thus by Corollary~\ref{cordecompC}, we have $\tilde{f}_*[W] = 0 $ in $\CH_0(S)_\bQ / \bQ o_S$.  So we deduce from equality~(\ref{detcusptac}) that 
$$\tilde{f}_*(\gO_{C/B}) = -\tilde{f}_*(j_1)_*[C^\gs] \text{ in } \CH_0(S)_\bQ / \bQ o_S.$$
 This identity together with equality~(\ref{eqnn}) give
$$\tilde{f}_*(j_1)_*[C^\gs] = 0 \text{ in } \CH_0(S)_\bQ / \bQ o_S.$$ 
By Lemma~\ref{invcusp},
$$\tilde{f}_*[Z_\cusp] = \tilde{f}_*(j_1)_*[C^\gs] \text{ in } \CH_0(S)_\bQ / \bQ o_S.$$
Finally by Lemma~\ref{id}, we conclude that $c_2(S)$ is proportional to $\tilde{f}_*[Z_\cusp] = 0$ in $\CH_0(S)_\bQ / \bQ o_S$.
\end{proof}

We finish this section by stating the following corollary (already appeared in the proof of Theorem~\ref{main}), which can  be considered either as a consequence of Proposition~\ref{det} and Corollary~\ref{vsuppcor}, or of Theorem~\ref{BVmain} and  Proposition~\ref{id}.
\begin{cor}
Let $(S,L)$ be a primitively polarized $K3$ surface satisfying hypothesis $(E)$. The push-forward under $f$ of the class modulo rational equivalence of the cusp locus $Z_\cusp$ is proportional to the Beauville-Voisin $0$-cycle $o_S$.
\end{cor}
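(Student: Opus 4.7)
The plan is to present this corollary as an essentially formal consequence of the machinery already assembled in the paper; as the text hints, there are two equivalent derivations, and I would give both since neither is computationally painful.

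The first route is the shortest: combine Proposition~\ref{id} with Theorem~\ref{BVmain}. Proposition~\ref{id} asserts the identity
\[
(\deg f) \cdot c_2(S) \,=\, \tilde{f}_*[Z_\cusp] \qquad \text{in } \CH_0(S)/\bZ o_S.
\]
By part~(ii) of Theorem~\ref{BVmain}, the left-hand side equals $24(\deg f) \cdot o_S$, hence $\tilde{f}_*[Z_\cusp]$ is an integer multiple of $o_S$ modulo $\bZ o_S$, and so is itself proportional to $o_S$ in $\CH_0(S)$.

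The second route is more intrinsic to the geometric framework of Section~\ref{proof}, and simply isolates the step that is already carried out inside the proof of Theorem~\ref{main}. Starting from Proposition~\ref{det},
\[
\tau(\gO_{C/B}) + [C^\gs] \,=\, [W] \qquad \text{in } \CH_0(C)_\bQ,
\]
I would push forward by $\tilde{f}\circ j_1$. The class $[W]$ disappears modulo $\bQ o_S$: indeed Proposition~\ref{det} also shows $[W] \in j_1^*\pi^*\Pic(B)$, so $[W]$ is the restriction to $U$ of $\ga \cdot D$ for some $1$-cycle $D$ supported on fibers of $\tilde\pi$, and Corollary~\ref{cordecompC} kills its pushforward. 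Then Corollary~\ref{vsuppcor} replaces $\tilde{f}_* \tau(\gO_{C/B})$ by $-2\tilde{f}_*[C^\gs]$ modulo $\bQ o_S$, and the equation collapses to
\[
-\tilde{f}_*[C^\gs] \equiv 0 \quad \text{in } \CH_0(S)_\bQ/\bQ o_S.
\]
Invoking Lemma~\ref{invcusp} to identify $\tilde{f}_*[Z_\cusp]$ with $\tilde{f}_*(j_1)_*[C^\gs]$ then gives the desired proportionality.

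There is no serious obstacle here: the only subtlety — already addressed in the proof of Theorem~\ref{main} — is the convention, justified by Lemma~\ref{mod}, that pushforwards of $0$-cycles defined only on $U$ are understood through an arbitrary extension to $\wt{\gS}$, with the ambiguity absorbed into the quotient by $\bQ o_S$. Once this is agreed upon, both arguments reduce to a direct substitution into identities already proved.
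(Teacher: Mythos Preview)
Your proposal is correct and matches the paper exactly: the paper states (without giving a separate proof) that the corollary ``can be considered either as a consequence of Proposition~\ref{det} and Corollary~\ref{vsuppcor}, or of Theorem~\ref{BVmain} and Proposition~\ref{id},'' and you have spelled out precisely these two derivations, the second of which is verbatim the argument inside the proof of Theorem~\ref{main}. The only cosmetic imprecision is that when you write ``$[W]$ is the restriction to $U$ of $\ga\cdot D$'' you mean $(j_1)_*[W]$, but the paper commits the same abuse.
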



\section*{Acknowledgement}

I wish like to thank Claire Voisin for her kindness in sharing her ideas and for many helpful discussions and suggestions. I also thank Xi Chen for interesting e-mail correspondence related to this work.

\bibliographystyle{plain}
\bibliography{BVcan3}

\end{document}